\newcommand{\tensor}{\mbox{$\otimes$}}
\newcommand{\pa}{\bindnasrepma}
\newcommand*{\oldneg}{\mathord{\sim}}
\newtheorem{defn}{Definition}
\newtheorem{lem}{Lemma}
\newtheorem{thm}{Theorem}
\newtheorem{prop}{Proposition}
\newtheorem{sublem}{Sublemma}
\newtheorem{fact1}{Fact}
\newtheorem{rem}{Remark}
\title{A Note on Switching Conditions for the Generalized Multiplicative Connectives}
\author{Yuki Nishimuta\thanks{Keio University, E-mail: nishimuta@abelard.flet.keio.ac.jp} \and Mitsuhiro Okada \thanks{Keio University, E-mail: E-mail: mitsu@abelard.flet.keio.ac.jp}}
\date{}
\begin{document}
\maketitle

\begin{abstract}
Danos and Regnier (1989) introduced the par-switching condition for multiplicative proof-structures and  simplified the sequentialization theorem of Girard (1987) by means of par-switching. Danos and Regnier (1989) also generalized the par-switching to a switching for $n$-ary connectives (hereafter called an $n$-ary switching) and showed that the ``expansion" property holds, namely that  any ``excluded-middle" formula admits a correct proof-net in the sense of their $n$-ary switching. They added a remark that the sequentialization theorem does not hold with their switching. Their definition of switching for $n$-ary connectives is a natural generalization of the original switching for the binary connectives. However, there are many other possible definitions of switching for $n$-ary connectives.  We give an alternative and ``natural" definition of $n$-ary switching, and we show that the proof of sequentialization theorem by Olivier Laurent with the par-switching works for our $n$-ary switching;  Consequently,  the sequentialization theorem holds for our $n$-ary switching. On the other hand, we remark that the ``expansion" property no longer holds under our switching anymore. We point out that no definition of $n$-ary switching satisfies both the sequentialization theorem and the ``expansion" property at the same time except for the purely tensor-based (or purely par-based) connectives.

\end{abstract}

\section{Introduction}
\label{intro}

The sequentialization theorem of Girard (1987) for the Multiplicative fragment of Linear Logic ($\mathsf{MLL}$) says that a (graphically represented)  multiplicative proof structure 
can be translated to a sequential $\mathsf{MLL}$-proof if his ``long-trip" condition is satisfied.  Danos and Regnier (1989) simplified the sequentialization theorem by introducing the notion of par-switching condition, which says that  a resulting  proof-structure from choosing one of two nodes for all par-links becomes a connected and acyclic graph.
Their sequentialization theorem asserts that if a given proof structure satisfies the par-switching condition, the proof-structure can be transformed to a sequential $\mathsf{MLL}$-proof. In the same paper, they generalized the usual binary multiplicative connectives, par and tensor, to $n$-ary connectives. They gave the way to introduce a pair of dual $n$-ary connectives by the notion of ``orthogonality of meeting graphs", which guarantees the main-cut elimination process of the cut between the introduction rules of the dual $n$-ary connectives. In addition, they chose one natural way of generalizing par-switching to a switching for $n$-ary connectives and they showed that (1)  the ``expansion property" holds if the switching condition is satisfied, that is, if  $\mathcal{C}(A_1,\dots,A_n), \mathcal{C}^{\ast}(\oldneg A_1,\dots,\oldneg A_n)$ has a correct proof-net from atomic-links, where   $A_i$ is an atom, and $\oldneg A_i$ is its dual in the sense of  the binary-dual, namely the usual negation, while $\mathcal{C}$ and $\mathcal{C}^{\ast}$ are new $n$-ary dual connectives, and that (2) the sequentialization theorem does not hold anymore with this switching definition. The purpose of this note is to point out that there exists another natural way of generalizing switching condition for $n$-ary connectives, and the switching condition with this alternative notion of switching leads to the sequentialization theorem at the expense of the ``expansion" property.
\par The principal formulas (of the upper-sequents) for the usual binary connectives, tensor rule and par-rule, in the typical (classical) one-sided sequent calculus can be expressed as $\vdash A_1 \  \vdash A_2$ for introduction rule of tensor $A_1\tensor A_2$, while $\vdash A_1, A_2$ for that of $A_1\pa A_2$, except for arbitrary auxiliary context formulas in $\mathsf{MLL}$. The situation can be expressed as the possible partition classes $\{(1)(2)\}$ for the tensor rule and $\{(1,2)\}$ for the par rule, partition of $\{1,2\}$.The tensor has two partition classes of singleton, while par a single class of two elements. This suggests that any $n$-ary connective rules can be introduced with a set of partition classes; for example, if $n=4$, one could consider a partition classes $\{(1,2)(3,4)\}$. In the case of the binary par-switching, Danos-Regnier use the selection function to choose one element from par-link $\{(1,2)\}$. When Danos-Regnier  generalize this switching to a $4$-ary link or rule such as $\{(1,2)(3,4)\}$, they keep this idea of the selection function and first chooses one class, either $(1,2)$ or $(3,4)$ to define the switching. There is also an alternative and natural way to define a switching; one chooses one out of each classes, namely, one out of $(1,2)$ and one out of $(3,4)$. We consider this alternative definition of switching  and remark that the switching condition with such the definition implies the  sequentialization theorem. We also remark that no ways of defining switching would satisfy both the sequentialization and expansion at the same time, except for the essentially  original binary connectives.

Danos and Regnier's par-switching condition is given via an association to Girard's long-trip condition. Several direct proofs are known that derive the sequentialization theorem from the Danos-Regnier par-switching condition \cite{DR}. To the best of our knowledge, the most well-known is Girard's proof \cite{Gir proofnet}.  Olivier Laurent gave a simple and direct proof, in his unpublished note ``Sequentialization of multiplicative proof nets" at 2013 (available at: http://perso.ens-lyon.fr/olivier.laurent \newline seqmill.pdf)  [Accessed 1 April 2018]. %
%
The main purpose of our note is to show that Laurent's proof of sequentialization  for the usual binary $\mathsf{MLL}$ connectives works for the generalized $n$-ary connectives if we take our alternative choice of the definition of switching, instead of  the definition of switching which was chosen by Danos-Regnier (1989). In the course of following Laurent's sequentialization proof, we slightly simplify some part of splitting lemma proof. The generalized connectives have been studied in Maieli (2019) and Jean-Yves Girard's paper ``transcendental syntax II: non deterministic case" at 2017, available at: http://girard.perso.math.cnrs.fr/trsy2.pdf  [Accessed 1 April 2018], where the author remarks, among others, importance of generalized connectives. However, the issue on the switching condition which we consider in this paper are not discussed in these papers.


\section{Preliminaries}

 We recall background knowledge of  generalized multiplicative connectives introduced by Danos and Regnier \cite{DR}. This material is contained in section 2 and 3 of \cite{DR}.  Background material on Multiplicative Linear Logic and proof-nets is collected in  the Appendix.

\medskip

Danos and Regnier introduced the generalized multiplicative rule satisfying the two properties of multiplicative connectives \cite[p. 188]{DR}.  A generalized connective is defined by particular instances of  generalized multiplicative rules.

\bigskip

The introduction rules of multiplicative connectives have the following two properties \cite{DR};
\begin{enumerate}
\item  all maximal subformulas of the conclusion formula  occur in  the premises,
\item  an introduction of a multiplicative connective does not require information about the contexts of the premises.
\end{enumerate}

The following is the general form of an introduction rule for a multiplicative connective. This form  satisfies the above two properties.

\begin{center} 
\def\fCenter{\ \vdash\ }
\Axiom$\fCenter \Gamma_1,  A_{11}, \dots, A_{1i_1} \ \ \cdots$
\Axiom$\fCenter\Gamma_m, A_{m1},\dots,A_{mi_m}$
\BinaryInf$\fCenter\Gamma_1,\dots, \Gamma_m, \mathcal{C}(A_1,\dots,A_n)$
\DisplayProof
\end{center}

where $ji_j\in\{1,\dots,n\}$ and $\mathcal{C}$ is an unspecified connective.

\medskip

We  consider the set $\mathcal{F}$ of partitions of a natural number  $n$. We assume that a partition $p$ has the form $p=\{(p_{11},\dots,p_{1j}),\dots(p_{k1},\dots,p_{km_{j}})\}$, $p_{ij}\in\{1,\dots, n \}$, $p\in\mathcal{F}$.  For each class $i$, a class $(p_{i1},\dots,p_{im_i})$ corresponds to a sequent $\vdash A_{i1}, \dots,\newline A_{im_{i}}$. Hence, each introduction rule of a connective $\mathcal{C}$ corresponds to a partition $p$.

\medskip
A set of introduction rules of a generalized connective $\mathcal{C}$ correspond to a partition set $P$. 
$P_{\mathcal{C}}$ denotes the partition set that corresponds to a generalized connective $\mathcal{C}$. We denote by $P_{\mathcal{C}}$ the partition set corresponding to $\mathcal{C}$ and regard it as the set of ``right rules" of $\mathcal{C}$. We consider the right rules of the generalized connective $\mathcal{C}^{\ast}$ as the left rules of $\mathcal{C}$, where $\mathcal{C}^{\ast}$ is the dual connective, in the sense of Danos-Regnier's orthogonality, which we shall explain below, of $\mathcal{C}$ (e.g. if $\mathcal{C}=\tensor$ then $\mathcal{C}^{\ast}=\pa$).
We put $\oldneg(\mathcal{C}(A_1,\dots,A_n))=\mathcal{C}^{\ast}(\oldneg A_1,\dots,\oldneg A_n)$ for some $\mathcal{C}^{\ast}$. It is required that the dual connective $\mathcal{C}^{\ast}$ of $\mathcal{C}$ is defined  such that the main step of the cut elimination proof holds for a pair $(\mathcal{C}, \mathcal{C}^{\ast})$. Thus, the duality of the generalized connectives is based on cut-elimination, and not based on logical duality.   Danos and Regnier invented  theory of meeting graph so as to define $\mathcal{C}^{\ast}$ such that the cut-elimination holds. \cite{DR}.


\begin{defn}
For any two partitions $p,q\in\mathcal{F}$, a meeting graph  $\mathcal{G}(p,q)$ is a labelled graph $(V_{1},V_{2}, E)$ as follows;
\begin{itemize}
\item $V_{1}$ is the set of upper nodes:  these nodes have labels such that the numbers in the same class of $p$ correspond to the same node.
\item $V_{2}$ is the set of lower nodes:   these nodes have labels such that the numbers in the same class of $q$ correspond to the same node.
\item$E$ is the set of edges connecting a node of $V_{1}$ and a node of $V_{2}$ such that for each number there is exactly one edge between the corresponding nodes of $p$ and $q$
\end{itemize}
\end{defn}


\begin{defn}
\cite{DR} Two partitions $p, q\in\mathcal{F}$ are orthogonal if the meeting graph $\mathcal{G}(p,q)$ is connected and acyclic. We denote $p\perp q$ if $p$ and $q$ are orthogonal.
\end{defn}
Hence, $p\perp q$ if and only if $\mathcal{G}(p,q)$ is connected and acyclic.

\begin{defn}
\cite{DR} Partition sets $P, Q \subseteq \mathcal{F}$ are orthogonal if for any $p\in P$ and $q\in Q$, $p\perp q$ holds. We denote $P\perp Q$ if $P$ and $Q$ are orthogonal.  $P^{\bot}$ is the maximal set which is orthogonal to $P$. $P^{\bot}=\{q|\forall p\in P. p\perp q\}$.

\end{defn}

For further detailed property of a partition set, see \cite{Maieli1}, although we do not need the further properties in this paper.


\begin{defn}
\cite{DR} A pair of n-ary generalized connectives ($\mathcal{C}$, $\mathcal{C}^{\ast}$) is a pair of  non-empty finite partition sets of a natural number $n$ ($P_{\mathcal{C}}, P_{\mathcal{C}^{\ast}}$) such that $(P_{\mathcal{C}})^{\bot}=P_{\mathcal{C}^{\ast}}$ and  $(P_{\mathcal{C}^{\ast}})^{\bot}=P_{\mathcal{C}}$.
\end{defn}

Note that $\mathcal{C}^{\ast}$ is uniquely determined by $\mathcal{C}$ because $(P_{\mathcal{C}})^{\bot}$ is the maximal set for $P_{\mathcal{C}}$. This fact means that  left rules of  $\mathcal{C}$ are uniquely determined by their right rules. If the condition of a generalized connective is merely $P\bot Q$, $Q\subseteq P^{\bot}$,  left rules of $\mathcal{C}$ is not uniquely determined. For example, we can consider different left rules $Q_1=\{(1,3)(2)\}$, $Q_2=\{(2,3)(1)\}$, $Q_3=\{\{(1,3)(2)\},\{(2,3)(1)\}\}$ for the right rule $P=\{(1,2)(3)\}$.


\begin{defn}
\cite{DR} A  generalized connective $\mathcal{C}$ is decomposable if $P_{\mathcal{C}}=P_{\alpha}$ for some $\mathsf{MLL}$-formula $\alpha$. Otherwise, $\mathcal{C}$ is said to be non-decomposable.
\end{defn}


Danos and Regnier gave the following example of non-decomposable connectives \cite{DR}.

\begin{center}
\def\fCenter{\ \vdash\ }
\Axiom$\fCenter\Gamma_1, A,B$
\Axiom$\fCenter\Gamma_2, C, D$
\BinaryInf$ \fCenter\Gamma_1, \Gamma_2, \mathcal{C}(A, B, C, D)$
\DisplayProof

\medskip

\def\fCenter{\ \vdash\ }
\Axiom$\fCenter\Gamma_1, A, C$
\Axiom$\fCenter\Gamma_2, B, D$ 
\BinaryInf$ \fCenter\Gamma_1,\Gamma_2, \mathcal{C}(A, B, C, D)$
\DisplayProof

\end{center}

\begin{center}
\def\fCenter{\ \vdash\ }
\Axiom$\fCenter\Delta_1, A, D$
\Axiom$\fCenter\Delta_2, B$
\Axiom$\fCenter\Delta_3, C$
\TrinaryInf$ \fCenter\Delta_1,\Delta_2,\Delta_3, \mathcal{C^{\ast}}(A, B, C, D)$
\DisplayProof

\medskip

\def\fCenter{\ \vdash\ }
\Axiom$\fCenter\Delta_1, B, C$
\Axiom$\fCenter\Delta_2, A$
\Axiom$\fCenter\Delta_3, D$
\TrinaryInf$ \fCenter\Delta_1,\Delta_2,\Delta_3,\mathcal{C^{\ast}}(A, B, C, D)$
\DisplayProof
\end{center}

Jean-Yves Girard gave other examples of non-decomposable  connectives in his paper ``transcendental syntax II: non deterministic case" in 2017.

Generalized connectives $\mathcal{C}, \mathcal{C}^{\ast}$ satisfy the main step of the cut-elimination  by the following Lemma.

\begin{lem}\label{DR lemma}
\cite[Lemma 1]{DR}   
For any $n$-ary connectives $\mathcal{C}_1$ and $\mathcal{C}_2$, the main cut-elimination step holds for any cut between any of the introduction rules for $\mathcal{C}_1$ and any of the introduction rules for $\mathcal{C}_2$ if and only if the corresponding partition[ sets] are orthogonal.
\end{lem}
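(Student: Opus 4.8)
\medskip
\noindent\emph{Proof strategy.} The plan is to read the main reduction step for a principal cut between a $\mathcal{C}_1$-introduction and a $\mathcal{C}_2$-introduction as a process of successive edge contractions on a meeting graph, so that the one sequent-calculus side condition in play — that an instance of the cut rule needs its two active formulas to occur in two \emph{distinct} sequents — turns into the graph-theoretic condition that one never contracts a loop.

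Concretely, I would fix a partition $p\in P_{\mathcal{C}_1}$ with classes $c_1,\dots,c_k$ and a partition $q\in P_{\mathcal{C}_2}$ with classes $d_1,\dots,d_l$, and a cut whose left premise $\vdash\Gamma,\mathcal{C}_1(A_1,\dots,A_n)$ is inferred by the $\mathcal{C}_1$-rule for $p$ (from premises $\vdash\Gamma_i,(A_t)_{t\in c_i}$ with $\Gamma=\Gamma_1,\dots,\Gamma_k$) and whose right premise $\vdash\Delta,\mathcal{C}_2(\oldneg A_1,\dots,\oldneg A_n)$ is inferred by the $\mathcal{C}_2$-rule for $q$ (from premises $\vdash\Delta_j,(\oldneg A_t)_{t\in d_j}$ with $\Delta=\Delta_1,\dots,\Delta_l$), the conclusion being $\vdash\Gamma,\Delta$. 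The canonical reduct deletes both introductions and the cut, keeps the $k+l$ subderivations, and joins them by $n$ new cuts, the $t$-th cutting the unique occurrence of $A_t$ among the $\mathcal{C}_1$-premises against the unique occurrence of $\oldneg A_t$ among the $\mathcal{C}_2$-premises; ``the main step holds'' means exactly that these $n$ cuts can be scheduled into a genuine sequent derivation of $\vdash\Gamma,\Delta$. I would then point out that this scheduling problem \emph{is} $\mathcal{G}(p,q)$: its upper vertices are the $\mathcal{C}_1$-premises (the classes of $p$), its lower vertices the $\mathcal{C}_2$-premises (the classes of $q$), and its $n$ edges are $1,\dots,n$, with edge $t$ joining the $p$-class and the $q$-class that contain $t$. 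Performing the cut on $A_t$ is contracting edge $t$ (the two sequents carrying $A_t$ and $\oldneg A_t$ fuse, their contexts uniting and losing $A_t$); this is a legal instance of the cut rule precisely when edge $t$ is not at that moment a loop; and every legal contraction drops the number of surviving sequents, hence of vertices, by one, so these two counts stay equal and a successful run ends with the single sequent $\vdash\Gamma,\Delta$.

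The remaining content is an elementary graph fact, which I expect to be the only genuine step: in a finite multigraph one can order and successively contract \emph{all} edges without ever contracting a loop, ending at a single vertex, if and only if the graph is connected and acyclic. For the forward direction, contracting a non-loop edge leaves the number of connected components $c$ and the quantity $E-V+c$ (which vanishes exactly for forests) unchanged while lowering $V$ by one, so a successful run forces $c=1$ (connected) and $E-V+c=0$ (acyclic); for the converse, a forest is closed under contraction, so no loop is ever met and all $V-1$ edges of a tree can be contracted in any order, ending at one vertex. Feeding this back: if $p\perp q$, i.e.\ $\mathcal{G}(p,q)$ is a tree, the $n$ sub-cuts may be performed in any order and produce a derivation of $\vdash\Gamma,\Delta$, so the main step holds; if $\mathcal{G}(p,q)$ is disconnected the reduct splits into at least two sequents and never reaches $\vdash\Gamma,\Delta$; and if $\mathcal{G}(p,q)$ has a cycle then some $A_t$ and $\oldneg A_t$ are eventually forced into the same sequent, blocking the corresponding sub-cut. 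Hence the main step holds for $(p,q)$ iff $p\perp q$, and therefore it holds for every cut between a $\mathcal{C}_1$-introduction and a $\mathcal{C}_2$-introduction iff $p\perp q$ for all $p\in P_{\mathcal{C}_1}$ and $q\in P_{\mathcal{C}_2}$, i.e.\ iff $P_{\mathcal{C}_1}\perp P_{\mathcal{C}_2}$. The hard part is not a deep theorem but the care needed to make the dictionary ``cut rule $\leftrightarrow$ contraction of a non-loop edge'' airtight — tracking parallel edges and nascent loops and keeping the sequent count aligned with the vertex count — after which both directions of the equivalence are immediate.
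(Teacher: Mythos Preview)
The paper does not actually prove Lemma~\ref{DR lemma}: it is stated with a citation to \cite[Lemma~1]{DR} and no argument is given. So there is no ``paper's own proof'' to compare against; the lemma is imported wholesale from Danos--Regnier.

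That said, your proposal is a correct and essentially complete reconstruction of the Danos--Regnier argument. The dictionary you set up --- classes of $p$ and $q$ as vertices, indices $1,\dots,n$ as edges, performing a sub-cut on $A_t$ as contracting edge $t$, and the cut-rule side condition (two distinct premise sequents) as ``the edge is not currently a loop'' --- is exactly the intended translation, and your invariant computation ($c$ and $E-V+c$ are preserved under non-loop contraction, forcing $c=1$ and $E-V+c=0$ at the start) cleanly yields the forward direction. The converse is immediate since trees are closed under contraction. One small point worth making explicit for airtightness: when $\mathcal{G}(p,q)$ is disconnected, the reduct is not merely ``hard to reach'' but genuinely produces several sequents whose union of contexts is $\Gamma,\Delta$, and no further rule of $\mathsf{MLL}(\mathcal{C}_i)$ can merge them without introducing a new connective --- so the main step fails in the strict sense that the conclusion $\vdash\Gamma,\Delta$ is not derivable from the reduct by cuts alone. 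You gesture at this, but since it is the only place where ``the main step fails'' needs to be made precise, it is worth a sentence.
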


We extend $\mathsf{MLL}$ by adding generalized connectives to $\mathsf{MLL}(\mathcal{C}_i)$ (where $\mathcal{C}_i   \ \ (i=1,\dots, n)$ is generalized connectives and each one has the arity $m_i$).  We identify the two binary generalized connectives with the tensor and par, respectively.  We also identify the $n$-ary tensor and par connectives with the corresponding generalized connectives. 

Formulas of $\mathsf{MLL}(\mathcal{C}_i)$   are defined as follows:

\medskip
$A:= P | \oldneg P | A\tensor A | A\pa A | \mathcal{C}_i(A, \dots, A)  \text{for each}\ i$
,where $P$ ranges over a denumerable set of propositional variables.
\medskip

The negation sign is used as the abbreviation, following Danos-Regnier's notation, although the negation symbol does not always represent logical negation because of the appearance of the generalized connectives;   
$\oldneg(\mathcal{C}_i(A_1,\dots,A_{m_i}))= \mathcal{C}^{\ast}_i(\oldneg A_1,\dots,\oldneg A_{m_i})$, $\oldneg(\mathcal{C}^{\ast}_i(A_1,\dots,A_{m_i}))= \mathcal{C}_i(\oldneg A_1,\dots,\oldneg A_{m_i})$. Observe that each generalized connective $\mathcal{C}_i$ determines its own negation by the cut-eliminationability. Here, for readability we use  the symbol ``$\oldneg$" to represent the orthogonal dual even though it does not mean the negation in the usual sense of MLL.
 
The set of inference rules of $\mathsf{MLL}(\mathcal{C}_i)$ is the union of that of $\mathsf{MLL}$ and that of $\mathcal{C}_i  (i=1,\dots,n)$.

\medskip

 We also extend the definition of links and proof-structures:  the node of label $\mathcal{C}_i$ has $m_i$ premises and one conclusion. If $A_1,\dots,A_{m_i}$ are the labels of the  premises, then the conclusion is labelled $\mathcal{C}_i(A_1,\dots,A_{m_i})$. We call this a $\mathbb{C}$-link. A link $l$ is a terminal link of a proof-structure containing  $\mathbb{C}$-links $\mathcal{S}$ when the conclusions of $l$ are conclusions of $\mathcal{S}$. 
 
  The cut-elimination theorem of $\mathsf{MLL}(\mathcal{C}_i)$  follows immediately from the above lemma \ref{DR lemma}.
\medskip

\begin{prop}\cite{DR}
Any provable sequent in $\mathsf{MLL}(\mathcal{C}_i)$ is provable without the cut rule.
\end{prop}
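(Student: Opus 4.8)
The plan is to prove cut-elimination for $\mathsf{MLL}(\mathcal{C}_i)$ by the standard two-layer strategy: first establish that every individual cut can be locally reduced (the \emph{cut-reduction} step), then argue that iterating these reductions terminates, so that a cut-free proof is reached. Since the statement only asserts the existence of a cut-free proof of any provable sequent, I would actually be content to exhibit a terminating rewriting procedure on sequent proofs that strictly decreases a suitable measure whenever a cut is present.

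For the reduction step I would classify cuts exactly as in ordinary $\mathsf{MLL}$ cut-elimination, with one extra family of cases for the generalized connectives. The \emph{commutative} (or \emph{rank-reduction}) cases—where at least one premise of the cut is not the principal formula of the rule immediately above it—are handled uniformly: one permutes the cut upward past that rule, exactly as in $\mathsf{MLL}$; the presence of $\mathbb{C}$-links changes nothing here because property (2) of multiplicative rules (no context information is needed) guarantees the permutation is sound, and the new $\mathbb{C}$-link introduction rules are themselves multiplicative. The \emph{axiom} case is trivial. The only genuinely new case is the \emph{principal/principal} (key) case: a cut on $\mathcal{C}(A_1,\dots,A_n)$ against $\oldneg(\mathcal{C}(A_1,\dots,A_n)) = \mathcal{C}^\ast(\oldneg A_1,\dots,\oldneg A_n)$, where the left occurrence is introduced by some partition $p \in P_{\mathcal{C}}$ and the right by some partition $q \in P_{\mathcal{C}^\ast}$. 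Here I invoke Lemma~\ref{DR lemma}: since by definition $P_{\mathcal{C}^\ast} = (P_{\mathcal{C}})^\bot$, we have $p \perp q$, so the main cut-elimination step is available, replacing the single cut on $\mathcal{C}$ by a collection of cuts on the strictly smaller subformulas $A_1,\dots,A_n$ arranged according to the (connected, acyclic) meeting graph $\mathcal{G}(p,q)$.

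For termination I would order cuts lexicographically by (i) the maximal logical complexity of a cut formula occurring in the proof, and (ii) for a topmost cut of that maximal complexity, the combined size of the two subderivations above it (the "rank" of the cut). A commutative reduction keeps (i) fixed and decreases (ii); the key step decreases (i) because the new cuts are on proper subformulas of $\mathcal{C}(A_1,\dots,A_n)$—this is precisely why Lemma~\ref{DR lemma} delivers cuts only on the $A_j$'s. A standard induction on this measure, picking at each stage an uppermost cut of maximal cut-rank, shows the process terminates in a cut-free proof. Hence any provable sequent of $\mathsf{MLL}(\mathcal{C}_i)$ is provable without cut.

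The main obstacle—though it is more bookkeeping than conceptual—is the key case: one must check that the result of the meeting-graph-guided rewriting is genuinely a correct sequent proof and that the new cut formulas are all of strictly lower complexity than $\mathcal{C}(A_1,\dots,A_n)$. Both facts are exactly what the Danos--Regnier orthogonality machinery (Definitions of meeting graph, orthogonality, and Lemma~\ref{DR lemma}) was designed to supply, so the argument reduces to citing Lemma~\ref{DR lemma} and observing that each $A_j$ has strictly smaller complexity than $\mathcal{C}(A_1,\dots,A_n)$. A secondary point worth a line is that commutative permutations never create a cut of higher complexity, which is immediate since permutation does not change cut formulas.
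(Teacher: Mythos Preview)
Your proposal is correct and takes essentially the same approach as the paper: the paper simply states that the cut-elimination theorem ``immediately follows from the above Lemma~\ref{DR lemma}'' without further detail, and your sketch merely unpacks this into the standard commutative/axiom/principal reduction framework with Lemma~\ref{DR lemma} handling the key case. In other words, you have spelled out exactly what the paper leaves implicit.
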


\section{Partition switching for the sequentialization theorem}

We introduce  a new definition of switching  for the generalized multiplicative connectives such that the switching condition implies the sequentialization theorem. We call our new switching the ``partition switching", because we think that our switching definition is a natural one from the viewpoint of the underlying partitions. We call Danos and Regnier's original switching for the generalized connectives ``Danos-Regnier switching", in this Section. The Danos-Regnier switching has the expansion property to be explained later. However, the sequentialization theorem for generalized connectives does not hold in general with their switching; Danos-Regnier  remark that,  in general, a sequent  $\vdash\mathcal{C}(A_1,\dots,A_n), \mathcal{C}^{\ast}(\oldneg A_1,\dots,\oldneg A_n)$ is not provable, and they added ``Hence, we cannot expect every correct proof-net to be sequentializable \cite[p.197]{DR}." %
Their view on the failure of  the sequentialization Theorem is based on their switching condition.  Thus, the fact that a sequent with excluded middle conclusion  is not provable does not by itself imply the failure of  the sequentialization Theorem. %
 We propose the partition switching, for generalized connectives, and using our new switching, we show that the sequentialization theorem holds for generalized connectives. However, as we explain later, the expansion property does not hold for our switching.

\bigskip

Our definition of  partition switching is as follows. 
\begin{defn}
Let $\mathcal{S}$ be a proof-structure containing $\mathbb{C}$-links $\mathcal{C}_i$ $(i=1,\dots,n)$. \newline A  partition switching $I$ of $\mathcal{S}$ is a family of functions indexed by partitions, \newline $f_{p}: P\restriction{p}\rightarrow \mathcal{P}(\mathcal{P}(\mathbb{N}))$ (where $P$ is a partition set and $p\in P$) such that $f_{p}(p)=\{p_{1f_{p}(1)},\dots,p_{kf_{p}(k)}\}$  (where $p=\{(p_{11},\dots,p_{1m_1}), \dots,(p_{k1},\dots,p_{km_k})\} , p_{jl}\in\{1,\dots \newline,  n \},  p_{if_{p}(i)}\in\{p_{i1},\dots , p_{im_i}\}$).
\end{defn}


\begin{defn}
Let $\mathcal{S}$ be a proof-structure containing $\mathbb{C}$-links $\mathcal{C}_i$ $(i=1,\dots,n)$ and $I=\{f_1,\dots, f_r\}$ be an arbitrary partition switching. $\mathcal{S}_{f_{p}}$ (where $f_p\in I$) is obtained from $\mathcal{S}$ by deleting all edges from $A_j$ $(j=1,\dots,n)$ to $\mathcal{C}_i$-node except those corresponding to $f_{p}$. We call  $\mathcal{S}_{f_{p}}$ the correctness graph of $\mathcal{S}$ for each $f_p$. We define $\mathcal{S}_{I}$ as follows: $\mathcal{S}_{I}=\bigcup_{f_p\in I}\mathcal{S}_{f_{p}}$.
\end{defn}

\begin{defn}
Let $\mathcal{S}$ be a proof-structure containing $\mathbb{C}$-links $\mathcal{C}_i$ $(i=1,\dots,n)$ and $I=\{f_1,\dots, f_r\}$ be an arbitrary partition switching. $\mathcal{S}$ is correct in the sense of partition switching if and only if there exists $f_{p}\in I$ such that  $\mathcal{S}_{f_{p}}$ is connected and acyclic. A proof net in the sense of partition switching is a correct proof-structure in the sense of partition switching.

\end{defn}

We give an example of  partition switching:    $\mathcal{C}(A_1,A_2,A_3)=(A_1\tensor A_2)\pa A_3$, $P=\{p_1, p_2\}$. For $p_1$, we choose one element from each class (e.g. $a_1, a_2$) and we obtain a switching $I_1=\{a_1, a_2\}$. The case of $p_2$ is similar.
\begin{center}
 $f_{p_1}: p_1=\{(a_1,  a_3),(a_2)\}\Longrightarrow_{I_1} a_1, a_2 $

 $f_{p_1}: p_1=\{(a_1,  a_3),(a_2)\}\Longrightarrow_{I_2} a_3, a_2 $
\medskip

or

 $f_{p_2}: p_2=\{(a_1),(a_2, a_3)\}\Longrightarrow_{J_1} a_1, a_2 $

$f_{p_2}: p_2=\{(a_1),(a_2, a_3)\}\Longrightarrow_{J_2} a_1, a_3 $

\end{center}

See Fig. \ref{partition switching graph} for the corresponding graphs of the above switchings.

\begin{figure}[h]
\begin{framed}
\begin{center}
\begin{tikzpicture}
\node at (0,0){$A_1$};
\node at (1,0) {$A_2$}     ;
\node at (2,0){$A_3$};
\node at (1,-1.2){$\mathcal{C}(A_1,A_2,A_3)$};
\draw[thick](0.1,-0.2)--(1,-0.8);
\draw[thick](1.1,-0.2)--(1.1,-0.8);
\end{tikzpicture}
\medskip
\begin{tikzpicture}
\node at (0,0){$A_1$};
\node at (1,0) {$A_2$}     ;
\node at (2,0){$A_3$};
\node at (1,-1.2){$\mathcal{C}(A_1,A_2,A_3)$};
\draw[thick](1.9,-0.2)--(1.2,-0.8);
\draw[thick](1.1,-0.2)--(1.1,-0.8);

\end{tikzpicture}
or
\begin{tikzpicture}
\node at (0,0){$A_1$};
\node at (1,0) {$A_2$}     ;
\node at (2,0){$A_3$};
\node at (1,-1.2){$\mathcal{C}(A_1,A_2,A_3)$};

\draw[thick](0.1,-0.2)--(1,-0.8);
\draw[thick](1.1,-0.2)--(1.1,-0.8);
\end{tikzpicture}
\medskip
\begin{tikzpicture}
\node at (0,0){$A_1$};
\node at (1,0) {$A_2$}     ;
\node at (2,0){$A_3$};
\node at (1,-1.2){$\mathcal{C}(A_1,A_2,A_3)$};
\draw[thick](1.9,-0.2)--(1.2,-0.8);
\draw[thick](0.1,-0.2)--(1,-0.8);
\end{tikzpicture}
\end{center}
\caption{Example of the partition switching}\label{partition switching graph}
\end{framed}
\end{figure}

%

\begin{rem}
 The partition switching is a generalization of $\pa$-switching because if we restrict the partition switching  to the binary case, it coincides with the usual $\pa$-switching.  In fact, the switching for generalized connectives satisfies the following conditions when it is restricted to the binary case; (i) a switching chooses at least one element from each class because the $\tensor$ connective has no partition switching and $P_{\tensor}=\{(1),(2)\}$ holds; and (ii) a switching  chooses at most one element from each class because the $\pa$-connective has  partition switching and $P_{\pa}=\{(1, 2)\}$ holds.

\end{rem}

We now show that by replacing the Danos-Regnier switching by our partition switching a  proof of the sequentialization theorem works.
\medskip


\begin{thm}\label{gen. seq}(Sequentialization Theorem for generalized multiplicative connectives)
Let $\mathcal{S}$ be an arbitrary proof-structure containing arbitrary $\mathbb{C}$-links $\mathcal{C}_i$   $(i=1,\dots, n)$. If $\mathcal{S}$ is a proof-net in the sense of the partition switching, then $\mathcal{S}$ is sequentializable.
\end{thm}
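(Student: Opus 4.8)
The plan is to run Laurent's sequentialization argument by induction on the number of links of $\mathcal{S}$, checking that the one place where the binary proof uses the shape of a $\pa$-switching --- ``select one of the two premise-edges of a $\pa$-link'' --- survives verbatim once it is replaced by ``for some $p \in P_{\mathcal{C}}$, select one premise-edge inside each class of $p$''. The observation licensing this is that the right rule of $\mathcal{C}$ attached to a partition $p = \{c_1,\dots,c_k\}$ has exactly $k$ premises, the formulas indexed by $c_j$ being grouped in the $j$-th premise, so a partition switching of a $\mathbb{C}$-link mirrors exactly the branching of the corresponding $\mathcal{C}$-rule. If $\mathcal{S}$ is a single link it is an axiom link, sequentialized by the axiom rule. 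Otherwise, if some conclusion of $\mathcal{S}$ is the conclusion of a $\pa$-link --- or more generally of a $\mathbb{C}$-link whose partition set contains the one-class partition $\{(1,\dots,m)\}$, which by the meeting-graph criterion forces $\mathcal{C}$ to be the $m$-ary $\pa$ --- I erase that link $\ell$, exposing its premises as conclusions, obtaining $\mathcal{S}'$. Any partition switching $I'$ of $\mathcal{S}'$ extends to $\mathcal{S}$ by choosing at $\ell$ the one-class partition and a single premise-edge, and $\mathcal{S}'_{I'}$ is then $\mathcal{S}_I$ with the degree-one node $c$ deleted, hence again connected and acyclic; so $\mathcal{S}'$ is a proof-net, and the induction hypothesis plus one $\pa$-rule finishes this case.

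In the remaining case every conclusion of $\mathcal{S}$ comes from an axiom link or from a non-$\pa$ $\mathbb{C}$-link (binary $\tensor$ included), and I invoke a \emph{splitting lemma}: there is a terminal $\mathbb{C}$-link $\ell$ for some $\mathcal{C}$ and a partition $p = \{c_1,\dots,c_k\} \in P_{\mathcal{C}}$ (or a terminal cut) such that erasing $\ell$ and exposing its premises decomposes $\mathcal{S}$ into exactly $k$ proof-structures $\mathcal{S}_1,\dots,\mathcal{S}_k$ with $\mathcal{S}_j$ containing precisely the premises $\{A_i : i \in c_j\}$. Granting this, each $\mathcal{S}_j$ is a proof-net: a partition switching of $\mathcal{S}_j$ completes to one of $\mathcal{S}$ by taking arbitrary switchings on the other components and, at $\ell$, the partition $p$ with the class-$c_i$ edge pointing into $\mathcal{S}_i$, and then connectedness and acyclicity of the correctness graph of $\mathcal{S}_j$ follow from those of $\mathcal{S}$ by the observation that the glued correctness graph of $\mathcal{S}$ meets each component in the single vertex $c$, so a simple path or cycle inside one component cannot leave it. By induction each $\mathcal{S}_j$ sequentializes, and the $\mathcal{C}$-rule for $p$ (respectively the cut rule) glues the $k$ subproofs into a sequentialization of $\mathcal{S}$.

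The main obstacle is the splitting lemma, and the plan is to adapt Laurent's proof of it. Assuming for contradiction that no terminal $\mathbb{C}$-link and no terminal cut admits a matching splitting partition, I would, for each candidate terminal $\mathbb{C}$-link $\ell$ and each $p \in P_{\mathcal{C}}$, use the failure of the $p$-split to build a partition switching that keeps at $\ell$ ``the wrong'' premise-edges together with a surviving path joining two premises that were meant to lie in different components, and then play these paths off against one another along a maximal terminal link (Laurent's ordering argument, in the spirit of Girard's empires) to assemble a switching of $\mathcal{S}$ whose correctness graph is disconnected or cyclic, contradicting that $\mathcal{S}$ is a proof-net. The only genuinely new point over the binary case is that a switching at a $\mathbb{C}$-link must now be read class by class; but since the partition switching keeps exactly one node per class, precisely as the $\mathcal{C}$-rule branches, the Danos--Regnier orthogonality $P_{\mathcal{C}}^{\bot} = P_{\mathcal{C}^{\ast}}$ supplies exactly the connectivity and acyclicity accounting the argument needs, and the rest is Laurent's proof unchanged. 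I also expect to shorten the splitting-lemma bookkeeping slightly by working directly with the induced components $\mathcal{S}_1,\dots,\mathcal{S}_k$ rather than with empires.
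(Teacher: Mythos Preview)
Your proposal is correct and follows essentially the same route as the paper: both run Laurent's induction-plus-splitting-lemma argument, first peeling off terminal $\pa$-links, then invoking a splitting terminal $\mathbb{C}$-link and recursing on the resulting pieces. You are somewhat more explicit than the paper about two points it leaves implicit (that the induced split must match some $p\in P_{\mathcal C}$ in order to apply the $\mathcal C$-rule, and why the pieces are again proof-nets), while your splitting-lemma argument is more of a sketch than the paper's concrete sublemma-plus-cycle construction; but the underlying plan is the same.
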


We shall give a slightly modified version of the proof for binary connectives given by Olivier Laurent in his unpublished note ``Sequentialization of multiplicative proof nets" at 2013 (available at: http://perso.ens-lyon.fr/olivier.laurent \newline /seqmill.pdf) [Accessed 1 April 2018] (Laurent, 2013). Our proof of Sequentialization Theorem for generalized multiplicative connectives contains a proof of Sequentialization Theorem for the binary multiplicative connectives as the special case because we can treat the binary connectives, par and tensor, as the special case (the case $n=2$) of generalized $n$-ary connectives, as remarked above. We often denote $n$-ary $\pa$ as  $\pa^n$.


\bigskip
\begin{sublem}[cf. (Laurent, 2013, Lemma 2)]\label{seq thm sublemma}
 Let $\mathcal{S}$ be a proof-net which does not contain any terminal $\pa^k$-links (for any non-zero natural number $k$) and which contains $n>1$ terminal $\mathbb{C}$-links  $\mathcal{C}_i$ ($i=1,\dots,n$). Here and in the next proof, a terminal $\tensor$-link is also regarded as a  terminal $\mathbb{C}$-link. If for an arbitrary terminal $\mathbb{C}$-link  $\mathcal{C}_i(A_{i1},\dots,A_{im})$,  $\mathcal{C}_i(A_{i1},\dots,A_{im})$ is non-splitting, then there exist a non-terminal node $l$ such that $l\neq\mathcal{C}_i(A_{i1},\dots,A_{im})$  and some paths $p_1, \dots, p_m$ such that each $ p_j$ starts from $\mathcal{C}_i(A_{i1},\dots,A_{im})$ and pass through $A_{i1},\dots,A_{im}$, respectively, and  meet at $l$.　
\end{sublem}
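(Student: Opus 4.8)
The plan is to adapt Olivier Laurent's proof of his splitting lemma for terminal $\tensor$-links (Laurent, 2013, Lemma 2) to terminal $\mathbb{C}$-links. Throughout, write $c=\mathcal{C}_i(A_{i1},\dots,A_{im})$ for the non-splitting terminal $\mathbb{C}$-link in question, $P=P_{\mathcal{C}_i}$ for its partition set, and $\mathcal{S}^-$ for the proof-structure obtained from $\mathcal{S}$ by deleting the node $c$, so that $A_{i1},\dots,A_{im}$ become conclusions of $\mathcal{S}^-$. First I would record a few structural facts. (i) Since $\mathcal{S}$ is connected and $c$ is incident only to $A_{i1},\dots,A_{im}$, every connected component of $\mathcal{S}^-$ contains at least one $A_{ij}$. (ii) All partitions in $P$ have the same number of classes, say $k$: orthogonality of $\mathcal{G}(p,q)$ says this graph is a tree on $|p|+|q|$ vertices with exactly $n$ edges, whence $|p|+|q|=n+1$, so $|p|$ does not depend on $p\in P$. (iii) For any partition switching $J$ of $\mathcal{S}^-$, the graph $\mathcal{S}^-_J$ is a forest with exactly $k$ components: extending $J$ by any choice at $c$ to a switching $\hat J$ of $\mathcal{S}$, the graph $\mathcal{S}^-_J$ is the tree $\mathcal{S}_{\hat J}$ with its degree-$k$ vertex $c$ removed, hence has $k$ components, and the $k$ premises selected by $\hat J$ at $c$ lie in pairwise distinct ones (a path between two of them in $\mathcal{S}^-_J$ plus the two deleted edges at $c$ would be a cycle in the tree $\mathcal{S}_{\hat J}$). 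Varying the selected representatives then shows that the partition of $\{A_{i1},\dots,A_{im}\}$ induced by the components of $\mathcal{S}^-_J$ is exactly the partition used at $c$; since this does not depend on $J$, in fact $P$ is a singleton $\{p^{\ast}\}$ (otherwise $c$ could not be a terminal link of a proof-net), and for every switching $J$ the component partition of $\mathcal{S}^-_J$ on the premises equals $p^{\ast}$.

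Next I would characterise splitting in these terms. If the component partition $\pi^-$ of the abstract graph $\mathcal{S}^-$ on $\{A_{i1},\dots,A_{im}\}$ equals $p^{\ast}$, then the components of $\mathcal{S}^-$ are proof-nets (acyclicity is inherited and, by (iii), each component of each $\mathcal{S}^-_J$ is connected) with conclusions matching the classes of $p^{\ast}$, so $c$ is splitting; conversely, if $c$ is splitting the pieces, being connected proof-nets, must be unions of $\mathcal{S}^-$-components matching the classes, so $\pi^-=p^{\ast}$. Hence ``$c$ non-splitting'' means $\pi^-\neq p^{\ast}$, and since the component partition of every $\mathcal{S}^-_J$ refines that of $\mathcal{S}^-$, we have $\pi^-\succ p^{\ast}$ strictly, i.e.\ there are $j,j'$ lying in different classes of $p^{\ast}$ but in the same component of $\mathcal{S}^-$. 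Such $A_{ij}$ and $A_{ij'}$ are then joined by a path in $\mathcal{S}^-$ but by no switching path (they are separated in every $\mathcal{S}^-_J$), so any path between them must enter some $\pa$-link by one premise edge and leave it by another. Taking $l$ to be such a $\pa$-link, it is non-terminal exactly because $\mathcal{S}$ is assumed to contain no terminal $\pa^k$-link, and $l\neq c$.

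It then remains to turn this pairwise statement into the full family $p_1,\dots,p_m$ all passing through one node $l$. Here I would use the remaining hypotheses — $n>1$ terminal $\mathbb{C}$-links and no terminal $\pa^k$-link — to show that $\mathcal{S}^-$ is in fact connected (so $\pi^-$ is the one-block partition): a component of $\mathcal{S}^-$ carrying premises from two classes of $p^{\ast}$ is disconnected by every switching yet abstractly connected, and by analysing its terminal links (which, after exposing the $A_{ij}$'s, may now include $\pa$-links sitting immediately above some $A_{ij}$) one should be able to push the obstruction back to a terminal $\pa^k$-link or to a second splitting terminal $\mathbb{C}$-link, contradicting the hypotheses. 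Once $\mathcal{S}^-$ is connected, for each $j$ I would fix a path $q_j$ from $A_{ij}$ to a common target (the $\pa$-link where the paths turn around, or simply $A_{i1}$), prefix the edge $c$–$A_{ij}$ to obtain $p_j$, and check that $p_j$ starts from $c$, passes through $A_{ij}$, ends at $l$, with $l$ non-terminal and $l\neq c$. The main obstacle is precisely this last reduction — deriving connectivity of $\mathcal{S}^-$ from non-splitting, and arranging a single common meeting node rather than merely pairwise ones — a difficulty absent from the binary case, where the only partitions coarser than $\{(1)(2)\}$ are itself and the one-block partition; everything else is a fairly routine transcription of Laurent's argument under the dictionary ``terminal $\mathbb{C}$-link $=$ $k$-ary $\tensor$ whose premises are grouped by $p^{\ast}$''.
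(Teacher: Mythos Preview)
Your proposal takes a substantially more elaborate route than the paper and, as you yourself acknowledge, leaves the decisive step open. The paper argues by a short contrapositive in the unswitched structure: assume no non-terminal $l\neq c$ can be reached from $c$ along each of the $m$ premise-edges; then some two premises $A_{is},A_{it}$ lie in different connected components of $\mathcal{S}\setminus\{c\}$ (otherwise any node of the common component would serve as $l$, and since $n>1$ this component contains another terminal $\mathbb{C}$-link and hence non-terminal nodes); so removing $c$ already separates $\mathcal{S}$ into at least two parts, which the paper takes to contradict non-splitting directly. There is no partition analysis, no discussion of $\mathcal{S}^-_J$, and no singleton-$P$ claim.

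The gap you identify is genuine for your approach and is exactly what the paper's contrapositive avoids. From your facts (i)--(iii) you only obtain that non-splitting means $\pi^-$ is \emph{strictly} coarser than $p^{\ast}$; this yields two classes merging, not that all $k$ classes collapse into a single component. So the reduction you propose---show $\mathcal{S}^-$ is connected, then route all $m$ paths through one $\pa$-node---requires an additional argument you do not supply, and in fact cannot hold under your own reading of ``splitting'': if $k\ge 3$, a non-splitting $c$ can perfectly well leave $\mathcal{S}^-$ with two or more components, and then no single $l$ is reachable from every $A_{ij}$ in $\mathcal{S}^-$. The paper's proof works because it reads ``$c$ splitting'' simply as ``removal of $c$ disconnects $\mathcal{S}$''; under that reading the contrapositive is immediate and no common meeting node needs to be constructed. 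Your combinatorial observations (the singleton-$P$ claim in particular) are interesting in their own right, but they are orthogonal to what the Sublemma requires and do not get you past the obstacle.
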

\begin{proof}
Let $\mathcal{C}_1(A_{11},\dots,A_{1m})$ be one of non-splitting terminal $\mathbb{C}$-link  and $\mathcal{C}_2(A_{21},\dots \newline,A_{2m})$ be an arbitrary distinct terminal  $\mathbb{C}$-link. We obtain some path from $\mathcal{C}_1$ to $\mathcal{C}_2$ by any switching because $\mathcal{S}$ is correct \footnote{We consider a proof-structure as if it is a directed graph in this proof.}.
We assume that a node $l$ such that $l\neq\mathcal{C}_i(A_{i1},\dots,A_{im})$ at which some paths $p_1, \dots, p_m$ meet does not exist, then we show a contradiction. There are upwards paths $p_1, \dots, p_m$ that start from $\mathcal{C}_1(A_{11},\dots,A_{1m})$ and pass through $A_{11},\dots,A_{1m_1}$, respectively. Then, at least two paths $p_s, p_t$ ($s, t=1,\dots, m$) has distinct targets $\alpha, \beta$ by assumption and there is no path from $\alpha$ to $\beta$ that does not pass through $\mathcal{C}_1(A_{11},\dots,A_{1m})$. Hence, if we remove $\mathcal{C}_1(A_{11},\dots,A_{1m})$, $\mathcal{S}$ is separated into two parts, the part containing $\alpha$ and the other containing $\beta$. It contradicts the assumption that $\mathcal{C}_1$ is non-splitting. Moreover, if $l$ is a terminal $\mathbb{C}$-node, then  $\mathcal{S}$ contains $n+1$ terminal $\mathbb{C}$-links and it contradicts the assumption. Hence $l$ is non-terminal.


\end{proof}

\begin{lem}[cf. (Laurent, 2013, p.4)]\label{Splitting}(Splitting Lemma)
Let $\mathcal{S}$ be a proof-net that does not contain terminal $\pa^{k}$-links and contains $n>0$ terminal $\mathbb{C}$-links $\mathcal{C}_i$ $(i=1,\dots,n)$. Then, at least one of terminal $\mathbb{C}$-links split.

\end{lem}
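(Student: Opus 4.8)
The plan is to argue by contradiction: suppose that every terminal $\mathbb{C}$-link in $\mathcal{S}$ is non-splitting. First I would dispatch the degenerate case $n=1$: if $\mathcal{S}$ has a unique terminal $\mathbb{C}$-link $\mathcal{C}_1(A_{11},\dots,A_{1m})$ and no terminal $\pa^k$-links, then since $\mathcal{S}$ is a proof-net its conclusion sequent is a single formula, and removing the node $\mathcal{C}_1$ cannot disconnect $\mathcal{S}$ only if the immediate subtrees above the premises $A_{11},\dots,A_{1m}$ are already pairwise disjoint and exhaust $\mathcal{S}$ — which is exactly the statement that $\mathcal{C}_1$ splits, giving the contradiction. (One checks here that with no terminal $\pa$-links present the only terminal links are $\mathbb{C}$-links, axioms, or cuts, and a small case analysis handles those.) So we may assume $n>1$ and invoke Sublemma~\ref{seq thm sublemma}.

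The core of the argument is then to derive a contradiction from the conjunction of the conclusions of the Sublemma over \emph{all} terminal $\mathbb{C}$-links. For each terminal $\mathbb{C}$-link $\mathcal{C}_i$, the Sublemma produces a non-terminal node $l_i\ne \mathcal{C}_i$ and paths $p^i_1,\dots,p^i_{m_i}$ from $\mathcal{C}_i$ through its premises that all meet at $l_i$. I would now run the standard "descending/ascending" (or "empire/kingdom") bookkeeping argument in the style of Laurent's note: fix one terminal $\mathbb{C}$-link $\mathcal{C}_1$ and follow the paths emanating from it; every such path, being upward from $\mathcal{C}_1$, must eventually turn around and come down, and since there are no terminal $\pa$-links the only places a descending path can terminate are at (the premises of) terminal $\mathbb{C}$-links or cuts. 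Tracking where these paths go and using acyclicity of the chosen correctness graphs, one shows that the meeting node $l_1$ must itself lie "below" some other terminal $\mathbb{C}$-link $\mathcal{C}_2$ in a way that forces $l_1$ to be on the path structure associated with $\mathcal{C}_2$; iterating, the finitely many terminal $\mathbb{C}$-links get linearly pre-ordered by this "$l_i$ sits under $\mathcal{C}_{\sigma(i)}$" relation, and finiteness forces a cycle in that pre-order, which in turn yields an actual cycle in a suitable correctness graph — contradicting correctness. Essentially this is where the hypothesis "no terminal $\pa^k$-links" is used twice: once to guarantee the only other terminal links are $\mathbb{C}$-links (or cuts, handled separately), and once to ensure the descending paths cannot escape through a terminal $\pa$ that would absorb the cycle.

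The delicate point — and the step I expect to be the main obstacle — is making precise the "meeting" behaviour in the generalized $n$-ary setting under our \emph{partition} switching rather than the Danos--Regnier switching. With the binary $\pa$ one simply toggles the single switch; here a correctness graph of a $\mathbb{C}$-link keeps one edge per class of a chosen partition $p\in P_{\mathcal{C}}$, so to say "paths $p^i_1,\dots,p^i_{m_i}$ meet at $l_i$" one must be careful about which premises are actually connected to $\mathcal{C}_i$ in a given switching: premises in the same class are alternatives, while premises in different classes are all retained. I would therefore phrase the path argument in terms of classes of the partition rather than individual premises, using the orthogonality of $P_{\mathcal{C}}$ and $P_{\mathcal{C}^\ast}$ (Definitions following Lemma~\ref{DR lemma}) to guarantee that for each choice of switching the retained edges form precisely the structure needed for connectedness, and that cycles are genuinely excluded. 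Once the correct "class-level" formulation of the meeting node is in place, the combinatorial counting argument that produces the forbidden cycle goes through essentially verbatim as in Laurent's note, which is exactly the simplification the paper advertises.
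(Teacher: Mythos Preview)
Your plan for the case $n>1$ is essentially the paper's argument: apply the Sublemma to each (assumed non-splitting) terminal $\mathbb{C}$-link $\mathcal{C}_i$ to obtain a non-terminal meeting node $l_i$; in a fixed correctness graph $\mathcal{S}_I$ take an upward path $q_i$ from $\mathcal{C}_i$ to $l_i$ and then, since $l_i$ is non-terminal, a downward path $d_i$ from $l_i$ to some terminal node, which by the hypothesis on terminal links must be some $\mathcal{C}_j$; finiteness of $\{\mathcal{C}_1,\dots,\mathcal{C}_n\}$ forces a return to an earlier $\mathcal{C}_t$, and the concatenation $q_1\cdot d_1\cdots q_s\cdot q$ is a cycle in $\mathcal{S}_I$. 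The paper adds one small case split you do not mention: if the closing path happens to be cut by some $\mathbb{C}$-node under the chosen switching, one reroutes through an auxiliary node $l'$ before concluding.

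Two places where your proposal diverges from the paper are worth flagging.

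\emph{The case $n=1$.} Your argument here is not the paper's, and as written it is circular: saying that removal of $\mathcal{C}_1$ ``cannot disconnect $\mathcal{S}$ only if the immediate subtrees above the premises are already pairwise disjoint and exhaust $\mathcal{S}$'' is just a restatement of ``$\mathcal{C}_1$ splits'', which is precisely the conclusion to be proved. The paper instead runs the same meeting-node mechanism as for $n>1$: from non-splitting one extracts a non-terminal node $l\neq\mathcal{C}_1$; in any switching $I$ there is an upward path $p$ from $\mathcal{C}_1$ to $l$, and since $l$ is non-terminal while $\mathcal{C}_1$ is the \emph{only} terminal node, the downward path $q$ from $l$ in $\mathcal{S}_I$ must land back at $\mathcal{C}_1$, producing the cycle $p\cdot q$.

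\emph{The orthogonality detour.} The paper's proof never invokes the orthogonality of $P_{\mathcal{C}}$ and $P_{\mathcal{C}^\ast}$; the Splitting Lemma is a purely graph-theoretic statement about correctness graphs under the partition switching, using only that each $\mathcal{S}_I$ is a connected tree and that non-terminal nodes admit a downward edge. The ``class-level'' reformulation you anticipate is unnecessary: the Sublemma locates the meeting node $l_i$ in the proof-structure itself, and the cycle is then built inside a single fixed $\mathcal{S}_I$. The subtlety you raise---that in $\mathcal{S}_I$ only one premise per class remains attached to $\mathcal{C}_i$---is exactly why the paper works with a \emph{single} upward path $q_i$ to $l_i$ in $\mathcal{S}_I$, rather than with the full family $p_{i1},\dots,p_{im}$ simultaneously.
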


\begin{proof}
Case $n=1$;

We assume that the terminal $\mathbb{C}$-links $\mathcal{C}_1(A_1,\dots,A_m)$ is non-splitting. By Sublemma \ref{seq thm sublemma}, there exists a non-terminal node $l$ such that $l\neq\mathcal{C}_1(A_1,\dots,A_m)$. We obtain a upward path $p$  from $\mathcal{C}_1(A_1,\dots,A_m)$ to $l$ in $\mathcal{S}_I$, where $I$ is an arbitrary switching. There is a downward path $q$ such that $q\neq p$ from $l$ to $\mathcal{C}_1(A_1,\dots,A_m)$ in $\mathcal{S}_I$ because $l$ is non-terminal and $\mathcal{C}_1(A_1,\dots,A_m)$ is the only terminal node. We obtain a cycle $p\cdot q$ (where ``$\cdot$" denotes a concatenation of two paths), which contradicts the assumption that $\mathcal{S}$ is a proof-net. 


Case $n>1$.　

The Figure \ref{proof splitting lemma} will be helpful to fo follow our proof.  We assume that all  the terminal $\mathbb{C}$-links $\mathcal{C}_i$'s are non-splitting and we show a contradiction. By Sublemma, for each $i$, there are some non-terminal node $l_i$ and some upwards paths $p_{i1}, \dots, p_{im}$ such that $p_{i1}, \dots, p_{im}$ meet at $l_i$,  paths $p_{i1}, \dots, p_{im}$ start from $\mathcal{C}_i(A_{i1},\dots, A_{im})$ and pass through $A_{i1},\dots,A_{im}$, respectively. For all $j$ ($j=1,\dots, n$), there are upward paths $q_j$ from $\mathcal{C}_j$ to $l_{j}$ in $\mathcal{S}_I$ (where $I$ is an arbitrary switching) by the assumption that $\mathcal{S}$ does not contain terminal $\pa^{k}$-links and $\mathcal{S}$ is correct.  We can obtain downward paths $d_j$ in $\mathcal{S}_I$ (where $I$ is an arbitrary switching) which start from $l_j$ and end with the terminal node because $l_j$ are non-terminal. There exists a path $q$ such that it starts at $l_s$  for some $s$ ($1\leq s\leq n$) and ends with $\mathcal{C}_t$ for some $t$, where $t<s$ (say, $\mathcal{C}_1(A_{11},\dots,A_{1m})$). This is  because $l_i$ ($i=1,\dots, n$) are non-terminal and  $\mathcal{S}$  contains only $n>0$ terminal links. 
If for an arbitrary switching $J$, the path $q$ is  contained in $\mathcal{S}_J$, we can construct  the path $q_1\cdot d_1\cdots\cdot q_s\cdot q$ . But, it is a cycle, which contradicts the correctness of $\mathcal{S}$. If for any switching $J$, $\mathcal{S}_J$ does not contain the path $u$, then  $u$ is separated by two parts $u_1$ and $u_2$ at some $\mathbb{C}$-node $l'$, then $l'$ is non-terminal by the assumption. It follows that there is a path $r$ from $l'$ to $\mathcal{C}_t(A_{t1},\dots,A_{tm_t})$ for some $t$ such that  $t<s$. We put $u'=u_1\cdot r$. Without loss of generality, we assume that $\mathcal{S}_J$ contains the path $u'$. We can construct the path $q_1\cdot d_1\cdots q_s\cdot u'$. It is a cycle, which contradicts the correctness of $\mathcal{S}$.
\end{proof}

We return to the proof of  Theorem \ref{gen. seq}.

\begin{proof}
(Proof of Sequentialization Theorem)  \\　By induction on the number of links contained in $\mathcal{S}$. We  prove it for the case that the terminal link is $\mathbb{C}$-link. If necessary, first we remove all terminal $\pa^{k}$-links from $\mathcal{S}$. By Lemma \ref{Splitting}, at least one of the terminal $\mathbb{C}$-links $\mathcal{C}_1(A_{1},\dots,A_{m})$ is splitting. Hence the removal of $\mathcal{C}_1(A_{1},\dots,A_{m})$ splits $\mathcal{S}$ into several sub-proof-nets $\{S_1,\dots,S_i\}$. By the induction hypothesis, there are some proofs $\pi_1,\dots,\pi_i$ such that $(\pi_i)^{\ast}=\mathcal{S}_i$ holds.  We apply the $\mathcal{C}$-rule to $\pi_1,\dots,\pi_i$ and obtain the proof $\pi$.

\end{proof}

We showed above that Olivier Laurent's direct proof of the sequentialization theorem for the binary connectives can be adapted to the $n$-ary generalized connectives when we take our partition-switching as a generalization of the binary par-switching, as the above proof essentially follows Laurent's proof. At the same time, we presented a slightly simplified proof for the splitting lemma. 


Figure \ref{proof splitting lemma} represents the $n>1$ case of the proof above.
\medskip

\begin{figure}[h]
\begin{framed}
\begin{center}
\begin{tikzpicture}
\node at (-0.2,-0.2){$\mathcal{C}_1$};
\draw[thick, ->](0,0)..controls(0.1,0.3)and(0.8,0.5)..(0.9,0);
\node at (0.5,0.5){$q_1$};
\node at (1,-0.2){$l_1$};
\draw[thick, ->](1.2,-0.2)..controls(1.2,-0.3)and(1.8,-0.5)..(1.9,-0.2);
\node at (1.5,0){$d_1$};
\node at (2.1,-0.2){$\mathcal{C}_2$};
\draw[thick, ->](2.3,0)..controls(2.4,0.3)and(3.2,0.5)..(3.3,0);
\node at (2.7,0.5){$q_2$};
\node at (3.5,-0.1){$l_2$};
\node at (3.9,-0.2){$\dots$};
\draw[thick, ->](4.1,-0.2)..controls(4.2,-0.3)and(5,-0.5)..(5.1,-0.2);
\node at (4.5,0){$d_{j-1}$};
\node at (5.3,-0.2){$l_j$};
\draw[thick, <-, dashed](-0.1,0)..controls(0.1,1.6)and(2.5,1.8)..(2.6,1.8);
\node at (4.5,1.3){$u_1$};
\node at (2.8,1.8){$l'$};
\draw[thick, <-](2.9,1.8)..controls(3.5, 1.6)and(4.5,1.2)..(5.2,0);
\draw[thick, <-, dashed](2.1,0)..controls(2.2,0.5)and(2.5,1.8)..(2.7,1.8);
\node at (2.1,0.8){$r$};
\node at (0.7,1.6){$u_2$};
\end{tikzpicture}
\end{center}
\caption{Figure representing the proof argument of the case $n>1$}\label{proof splitting lemma}
\end{framed}
\end{figure}

The converse direction of Sequentialization Theorem also holds with our partition switching as it is in the original binary case.

\begin{prop}\label{gen. seq2}
Let $\mathcal{S}$ be an arbitrary proof-structure containing arbitrary $\mathbb{C}$-links $\mathcal{C}_i$   $(i=1,\dots, n)$. If $\mathcal{S}$ is sequentializable, then $\mathcal{S}$ is a proof-net  in the sense of  the partition switching.
\end{prop}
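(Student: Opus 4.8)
The plan is to argue by induction on the length of a sequent calculus derivation $\pi$ of the sequent corresponding to $\mathcal{S}$, following the standard argument for the binary case but verifying that the inductive step for a $\mathbb{C}$-rule goes through with the partition switching. Concretely, let $\pi$ be a $\mathsf{MLL}(\mathcal{C}_i)$-proof with $(\pi)^{\ast}=\mathcal{S}$; I will show every correctness graph $\mathcal{S}_I$ is connected and acyclic by induction on the last rule of $\pi$. The axiom case and the $\tensor$/$\pa$ cases are exactly as in Laurent's (or Girard's) original argument, so the only genuinely new case is when the last rule is an introduction rule for a generalized connective $\mathcal{C}=\mathcal{C}_k$, corresponding to some partition $p=\{(p_{11},\dots,p_{1m_1}),\dots,(p_{k1},\dots,p_{km_k})\}\in P_{\mathcal{C}}$, applied to premises $\pi_1,\dots,\pi_k$ with associated proof-nets $\mathcal{S}_1,\dots,\mathcal{S}_k$, one for each class of $p$.

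For this case I would fix an arbitrary partition switching $I$ of $\mathcal{S}$ and analyze $\mathcal{S}_I$. The switching $I$ restricts to a switching on each $\mathcal{S}_j$, and by the induction hypothesis each $(\mathcal{S}_j)_{I}$ is connected and acyclic. Now $\mathcal{S}_I$ is obtained from the disjoint union of the $(\mathcal{S}_j)_I$ by adding the new $\mathcal{C}$-node together with, for each class $j$ of $p$, exactly one edge from that node to one of the conclusions $A_{j,f(j)}$ of $\mathcal{S}_j$ — because by definition the partition switching $f$ selects exactly one element from each class of $p$. Adding a fresh vertex joined by a single edge to each of $k$ pairwise-disjoint connected acyclic graphs (one edge into each) produces again a connected acyclic graph: acyclicity holds because the new node has no two edges into the same component, and connectedness holds because the single edge into each component suffices to attach it. Hence $\mathcal{S}_I$ is connected and acyclic. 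Since $I$ was arbitrary, $\mathcal{S}$ is a proof-net in the sense of the partition switching.

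The main thing to be careful about — and what I expect to be the crux — is exactly this bookkeeping of which edges of the $\mathbb{C}$-link survive in $\mathcal{S}_I$ and which component of $\mathcal{S}\setminus\{\mathcal{C}\text{-node}\}$ each surviving edge lands in. One must check that the edges retained by a partition switching really are ``one per class of $p$'' and that distinct classes correspond to the distinct sub-proof-nets $\mathcal{S}_j$ coming from the premises of the $\mathcal{C}$-rule; this is precisely where the choice of the partition switching (rather than the Danos--Regnier switching, which retains only one edge total per $\mathbb{C}$-link) is what makes the argument work. One also has to handle, as in the binary case, the bottom-up reading so that removing auxiliary context formulas $\Gamma_1,\dots,\Gamma_k$ does not interfere; this is routine. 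Everything else is the standard translation between sequent proofs and proof-structures, so no separate difficulty arises there.
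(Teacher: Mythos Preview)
Your approach --- induction on the length of the sequent-calculus proof $\pi$ --- is exactly the paper's own (one-line) proof, and you have spelled out the inductive step for a $\mathbb{C}$-rule with more care than the paper does.

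There is, however, a genuine gap in the inductive step. A partition switching $I$ at the terminal $\mathbb{C}$-link first chooses \emph{some} partition $p'\in P_{\mathcal{C}}$ and then one element from each class of $p'$ (cf.\ Definition~7 and the worked example immediately after it, where switchings based on both $p_1$ and $p_2$ are listed). Your argument silently assumes $p'=p$, the partition used by the last rule of $\pi$; only under that assumption are the surviving edges ``exactly one into each sub-proof-net $\mathcal{S}_j$''. When $p'\neq p$ this can fail. Take the Danos--Regnier non-decomposable connective with $P_{\mathcal{C}}=\{\{(1,2)(3,4)\},\{(1,3)(2,4)\}\}$, let $\pi$ end with the $\mathcal{C}$-rule for $p=\{(1,2)(3,4)\}$ applied to proofs of $\vdash a_1,a_2,\oldneg a_1\tensor\oldneg a_2$ and $\vdash a_3,a_4,\oldneg a_3\tensor\oldneg a_4$, and let the switching $I$ choose $p'=\{(1,3)(2,4)\}$, selecting $1$ from $(1,3)$ and $2$ from $(2,4)$. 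Then the $\mathcal{C}$-node retains edges only to $a_1$ and $a_2$, both of which lie in $\mathcal{S}_1$: the resulting $\mathcal{S}_I$ contains the cycle $\mathcal{C}\text{-node}-a_1-\oldneg a_1-(\oldneg a_1\tensor\oldneg a_2)-\oldneg a_2-a_2-\mathcal{C}\text{-node}$, while $\mathcal{S}_2$ is a separate component. So your key sentence ``adding a fresh vertex joined by a single edge to each of $k$ pairwise-disjoint connected acyclic graphs produces a tree'' does not apply, and the argument as written does not establish correctness for \emph{all} partition switchings. The paper's proof (``By induction on the length of a proof. It follows from the definition of the partition switching.'') passes over exactly the same point.
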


\begin{proof}
By induction on the length of a proof.  It follows from the definition of the partition switching. 
\end{proof}

Next, we explain the Danos and Regnier switching and compare their switching with ours.

The definition of the Danos-Regnier switching is as follows. (We regard a class of a partition as a set of elements $p_{ij}$).
\begin{defn}
(Danos-Regnier switching\cite{DR}) Let $\mathcal{S}$ be a proof-structure containing $\mathbb{C}$-links $\mathcal{C}_i$ $(i=1,\dots,n)$.
For an arbitrary $i\in\{1,\dots, n \}$ and $p\in P_{\mathcal{C}}$ (where  $p=\{(p_{11},\dots,p_{1j}),\dots,(p_{k1},\dots,p_{km_k})\}$, $p_{jk}\in\{1,\dots, n \}$), a Danos-Regnier switching $I$ is a  function $f: P\rightarrow Class(P)$ (where $P$ is a partition set)  such that  $f(p)=x_i\}$ ($x_i\in Class(p)$ and $\{p_{i1},\dots, p_{im_i}\}=x_i$).
\end{defn}

\begin{defn}(Correctness graph for Danos-Regnier switching)
Let $\mathcal{S}$ be a proof-structure containing $\mathbb{C}$-links $\mathcal{C}_i$ $(i=1,\dots,n)$ and $I$ be an arbitrary Danos-Regnier switching. The correctness graph $\mathcal{S}_I$ is obtained from  $\mathcal{S}$ and $I$ as follows; for each class, connect the nodes that belong to the same class and cut off the edges from classes to the $\mathcal{C}_i$-node except the class $x=f(p)$ i.e. the value of $I$.

\end{defn}

A proof-structures that has two nodes $\mathcal{C}(A_1,\dots,A_n), \mathcal{C}^{\ast}(\oldneg A_1,\dots,\oldneg A_n)$ for some atoms $A_1,\dots,A_n$  is counterexample of the sequentialization theorem using Danos-Regnier  switchings. It is easily shown as follows: by Fact \ref{expansion 1}, a proof-structure  $\mathcal{S}$  that has just two terminal nodes $\mathcal{C}(A_1,\dots,A_n)$ and $\mathcal{C}^{\ast}(\oldneg A_1,\dots,\oldneg A_n)$ for some $A_1,\dots,A_n$ is correct. However, the corresponding sequent $\vdash\mathcal{C}(A_1,\dots, A_n)$, $\mathcal{C^{\ast}}(\oldneg A_1,\dots, \oldneg A_n)$ does not necessarily have proof by Fact \ref{axiom deducibility}.

\begin{fact1}\label{expansion 1} 
If $\mathcal{S}$ is a proof-structure  that has just two terminal nodes $\mathcal{C}(A_1,\dots,A_n)$ and $\mathcal{C}^{\ast}(\oldneg A_1,\dots,\oldneg A_n)$ for some $A_1,\dots,A_n$,  then $\mathcal{S}$ is correct in the sense of Danos-Regnier switching.
\end{fact1}

We call the  terminal nodes with  labels  ``$\mathcal{C}, \mathcal{C}^{\ast}$" the excluded middle formula (with respect to $\mathcal{C}$).

\begin{fact1}\label{axiom deducibility}
A sequent $\vdash\mathcal{C}(A_1,\dots, A_n)$, $\mathcal{C^{\ast}}(\oldneg A_1,\dots, \oldneg A_n)$ for any dual pair of generalized connectives is provable if and only if $\mathcal{C}$=$\tensor^k$ or $\mathcal{C}$=$\pa^k$.
\end{fact1}

We note that the Facts \ref{expansion 1} and \ref{axiom deducibility} are essentially (implicitly) in Danos-Regnier \cite{DR}.

\begin{rem}
The Danos-Regnier switching is a generalization of $\pa$-switching. By considering binary case, we obtain $I^1_{\tensor}=\{1\}$, $I^2_{\tensor}=\{2\}$, and $I_{\pa}=\{1,2\}$. The roles of $\tensor$ and $\pa$ on graphs are reversed; $\tensor$ has the binary switching and $\pa$ does not have it.  Hence, if we use the left-one-sided sequent $\mathsf{MLL}$, the sequentialization theorem hold for binary connectives. \end{rem}

As we remarked in Introduction (Section 1), the Danos-Regnier switching chooses one class from a partition, while our partition switching chooses exactly one element from each class of a given partition.

The following graph (Fig. \ref{ex. DR switching}) is an example of a graph $\mathcal{S}_I$ (where $\mathcal{S}$ is correct in the sense of Danos-Regnier switching and $\mathcal{C}$ and $\mathcal{C}^{\ast}$ are non-decomposable connectives of Danos-Regnier). If we delete $\mathcal{C}$ and $\mathcal{C}^{\ast}$-nodes and its vertical edges, then we obtain the meeting graph $\mathcal{G}(P_{\mathcal{C}}, P_{\mathcal{C}^{\ast}})$. Conversely, if the meeting graph $\mathcal{G}(P_{\mathcal{C}}, P_{\mathcal{C}^{\ast}})$ is given, then we can choose two classes and add nodes  $\mathcal{C}$, $\mathcal{C}^{\ast}$ and two edges to chosen nodes. By this operation, we can obtain a correction  graph. How to connect $\mathbb{C}$-nodes with classes is not important in the definition of Danos-Regnier switching because it is irrelevant to correctness. Hence, a meeting graph is almost the same as a correction graph in the sense of  Danos-Regnier switching. 

\begin{figure}[h]
\begin{framed}

\begin{center}
\begin{tikzpicture}[scale=0.7]

\node at (0,0){$A_1$};
\node at (1,0) {$A_2$ }    ;
\node at (2,0){$A_3$};
\node at (3,0){$A_4$};
\node at (1,-1.5){$\mathcal{C}(A_1,A_2,A_3,A_4)$};
\draw[thick](0.1,-0.2)--(1,-0.8);
\draw[thick](2.1,-0.2)--(1,-0.8);

\draw[thick](1.1,-0.2)--(2,-0.8);
\draw[thick](3.1,-0.2)--(2,-0.8);

\draw[thick](1,-0.8)--(1,-1.3);
\draw[thick](5.6,-0.8)--(5.5,-1.2);

\node at (4,0){$\oldneg A_1$};
\node at (5,0) {$\oldneg A_2$}     ;
\node at (6,0){$\oldneg A_3$};
\node at (7,0){$\oldneg A_4$};
\node at (7,-1.5){$\mathcal{C}^{\ast}(\oldneg A_1, \oldneg A_2, \oldneg A_3, \oldneg A_4)$};

\draw[thick](4.1,-0.2)--(5.6,-0.8); 
\draw[thick](7,-0.2)--(5.6,-0.8);

\draw[thick, rounded corners=8pt](0,0.2)--(0,0.5)--(4,0.5)--(4,0.2);
\draw[thick, rounded corners=8pt](1,0.2)--(1,0.7)--(5,0.7)--(5,0.2);
\draw[thick, rounded corners=8pt](2,0.2)--(2,0.9)--(6,0.9)--(6,0.2);
\draw[thick, rounded corners=8pt](3,0.2)--(3,1.1)--(7,1.1)--(7,0.2);


\end{tikzpicture}
\end{center}
\caption{Example of Danos-Regnier switching}\label{ex. DR switching}
\end{framed}
\end{figure}

Axiom links are only allowed for atomic formulas in Definition \ref{links}. In the following part, we include axiom links for arbitrary formulas $A, \oldneg A$ in the definition of a link (and a proof-structure). %
We employ non-atomic initial axiom sequents and non-atomic initial axiom links so as to compare our switching condition and Danos and Regnier's one.  When we admit axiom links and non-atomic initial sequents for any formulas, a trade-off relation which will be explained later arises.
\medskip

We extend the axiom-rule and the axiom link as follows;

\begin{center}
\AxiomC{}
\RightLabel{(Ax)}
\UnaryInfC{$\ \vdash\  A, \oldneg A$}
\DisplayProof
\medskip
\begin{tikzpicture}
\node at (0,0){$A$};
\node at (2,0){$\oldneg A$}    ;
\draw[thick, rounded corners=8pt](0,0.2)--(0,0.5)--(2,0.5)--(2,0.2); 
\node at (3.5,0){(Axiom)};
\end{tikzpicture}
(where $A$ is an arbitrary formula which may include the generalized connectives.)
\end{center}

\medskip
\par We can show the sequentialization theorem for a proof-structure $\mathcal{S}$ containing non-atomic axioms and generalized connectives by the same proof as Theorem \ref{gen. seq}.

\begin{defn}
Let $\mathcal{S}$ be a proof-structure containing $\mathbb{C}$-links $\mathcal{C}_i$ $(i=1,\dots,n)$ and  $\mathbb{C}$-axiom links $L_j  (1\leq j\leq \frac{n}{2} $), (where $j$ is a natural number). The axiom expansion is the following operation $F$; $F(\mathcal{S})=\mathcal{S}'$ where $\mathcal{S}'$ is the same as $\mathcal{S}$ except that each  $\mathbb{C}$-axiom links $L_j $ is replaced with its subformula axiom links (For example, Fig. \ref{axiom expansion}).

\begin{figure}[h]
\begin{framed}

\begin{center}
\begin{tikzpicture}
\node at (0,0){$\mathcal{C}(A,\dots,D)$};
\node at (3,0){$\mathcal{C}^{\ast}(\oldneg A\dots,\oldneg D)$}     ;
\draw[thick, rounded corners=8pt](0,0.2)--(0,0.5)--(3,0.5)--(3,0.2);
\end{tikzpicture}
\medskip
$\leadsto$

\begin{tikzpicture}

\node at (0,0){$A$};
\node at (1,0) {$B$}     ;
\node at (2,0){$C$};
\node at (3,0){$D$};
\node at (1,-1.5){$\mathcal{C}(A, B, C, D)$};
\draw[thick](0.1,-0.2)--(0.8,-1.1);
\draw[thick](1.1,-0.2)--(1,-1.1);
\draw[thick](2.1,-0.2)--(1.1,-1.1);
\draw[thick](3.1,-0.2)--(1.2,-1.1);

\node at (4,0){$\oldneg A$};
\node at (5,0) {$\oldneg B$}     ;
\node at (6,0){$\oldneg C$};
\node at (7,0){$\oldneg D$};
\node at (5.5,-1.5){$\mathcal{C}^{\ast}(\oldneg A, \oldneg B, \oldneg C, \oldneg D)$};

\draw[thick](4.1,-0.2)--(5.3,-1.1); 
\draw[thick](5.1,-0.2)--(5.45,-1.1); 
\draw[thick](6,-0.2)--(5.6,-1.1);
\draw[thick](7,-0.2)--(5.8,-1.1);

\draw[thick, rounded corners=8pt](0,0.2)--(0,0.5)--(4,0.5)--(4,0.2);
\draw[thick, rounded corners=8pt](1,0.2)--(1,0.7)--(5,0.7)--(5,0.2);
\draw[thick, rounded corners=8pt](2,0.2)--(2,0.9)--(6,0.9)--(6,0.2);
\draw[thick, rounded corners=8pt](3,0.2)--(3,1.1)--(7,1.1)--(7,0.2);


\end{tikzpicture}
\end{center}

\caption{Axiom expansion : the upper diagram is expanded to the lower diagram. }\label{axiom expansion}
\end{framed}
\end{figure}

 \end{defn}

\begin{prop}\label{expansion DR}
Let $\mathcal{S}$ be a proof-net containing $\mathbb{C}$-links $\mathcal{C}_i$ $(i=1,\dots,n)$ and  $\mathbb{C}$-axiom links $L_j  (1\leq j\leq\frac{n}{2} $).  the proof-structure $\mathcal{S}'$  expanded from $\mathcal{S}$ is correct in the sense of  Danos-Regnier switching.
\end{prop}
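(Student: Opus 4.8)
The goal is to show that the axiom-expanded structure $\mathcal{S}'$ is Danos--Regnier correct, given that $\mathcal{S}$ is Danos--Regnier correct. The natural approach is local: the expansion operation $F$ acts only on $\mathbb{C}$-axiom links $L_j$, replacing each of them by a $\mathcal{C}$-link, a $\mathcal{C}^\ast$-link, and $m$ (where $m$ is the arity) ordinary atomic axiom links connecting the corresponding subformulas. So I would argue that each such local replacement preserves correctness, and then iterate over the (finitely many) axiom links $L_j$. Since Danos--Regnier correctness is "for every switching, the correctness graph is connected and acyclic," it suffices to fix an arbitrary switching $I'$ of $\mathcal{S}'$ and show $\mathcal{S}'_{I'}$ is a tree (connected and acyclic), using that $\mathcal{S}_I$ is a tree for the switching $I$ of $\mathcal{S}$ induced by restricting $I'$ to the links coming from $\mathcal{S}$.

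\textbf{Key steps.} First I would set up the correspondence between switchings of $\mathcal{S}'$ and switchings of $\mathcal{S}$: a switching $I'$ of $\mathcal{S}'$ restricts to a switching $I$ of $\mathcal{S}$ on all $\mathbb{C}$-links inherited from $\mathcal{S}$, and separately makes choices at the newly-created $\mathcal{C}$- and $\mathcal{C}^\ast$-nodes sitting above each former $L_j$. By hypothesis $\mathcal{S}_I$ is connected and acyclic. Second, I would analyze what $\mathcal{S}'_{I'}$ looks like in the neighborhood of one expanded axiom link $L_j$ between $\mathcal{C}(A_1,\dots,A_m)$ and $\mathcal{C}^\ast(\oldneg A_1,\dots,\oldneg A_m)$: in $\mathcal{S}_I$ there is a single edge realizing the axiom link (a connecting arc between the two nodes); in $\mathcal{S}'_{I'}$ this is replaced by the subgraph consisting of the $\mathcal{C}$-node with its selected class of incoming edges, the $\mathcal{C}^\ast$-node with its selected class, the $m$ atomic axiom arcs $A_k \,\text{---}\, \oldneg A_k$, and the two vertical edges down to $\mathcal{C}(\dots)$ and $\mathcal{C}^\ast(\dots)$. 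Third — and this is the crux — I would show that \emph{this replacement subgraph is itself connected and acyclic with exactly two "ports" (the two vertices $\mathcal{C}(\dots)$ and $\mathcal{C}^\ast(\dots)$ that connect it to the rest of $\mathcal{S}'_{I'}$), and that there is a unique path between those two ports through it}. This is precisely where the orthogonality $P_{\mathcal{C}}^{\bot} = P_{\mathcal{C}^\ast}$ enters: the switched subgraph at the two dual links is, after erasing the $\mathbb{C}$-nodes and vertical edges, exactly a \emph{switched} version of the meeting graph $\mathcal{G}(p, q)$ for the chosen partitions $p \in P_{\mathcal{C}}$, $q \in P_{\mathcal{C}^\ast}$ — but Danos--Regnier switching selects a single class on each side, so what remains of $\mathcal{G}(p,q)$ after the switch is a forest; one then checks it is in fact a subtree spanning the two ports (connectivity is guaranteed precisely because $p \perp q$, which is what Fact~\ref{expansion 1} already encapsulates for the isolated case). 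Finally, I would invoke a standard graph-surgery lemma: replacing a single edge of a tree by a connected, acyclic gadget that touches the rest of the graph only at the two former endpoints of that edge yields again a tree. Iterating over all $L_j$ completes the proof.

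\textbf{Main obstacle.} The delicate point is Step three: making precise that the switched gadget over a pair of dual $\mathbb{C}$-axiom links is connected and acyclic "with the right two ports," i.e. that a Danos--Regnier switching on both the $\mathcal{C}$-link and the $\mathcal{C}^\ast$-link of an expanded axiom link cannot disconnect the gadget and cannot create a cycle inside it. This is essentially a restatement of Fact~\ref{expansion 1} (which says the isolated two-node structure is Danos--Regnier correct), so the cleanest route is to reduce to that Fact rather than re-proving the meeting-graph combinatorics: contract the rest of $\mathcal{S}'_{I'}$ outside the gadget to observe that if the gadget failed to be a subtree joining its two ports then the isolated expanded axiom structure would fail some switching, contradicting Fact~\ref{expansion 1}. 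A secondary, purely bookkeeping obstacle is handling nested $\mathbb{C}$-connectives inside the $A_k$ correctly — but since $F$ by definition replaces $\mathbb{C}$-axiom links by \emph{subformula} axiom links (which may themselves be $\mathbb{C}$-axiom links on the immediate subformulas), one either iterates $F$ to full atomic expansion or argues by induction on total formula size; either way the inductive step is the same local surgery argument.
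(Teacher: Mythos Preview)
Your proposal is correct and follows essentially the same approach as the paper: both reduce to Fact~\ref{expansion 1} applied to the local gadget sitting over each expanded $\mathbb{C}$-axiom link, and both use the observation that gluing this correct gadget to the (connected, acyclic) remainder of the switched graph at its two ports preserves correctness. The paper's proof is a two-line sketch of exactly this idea (decompose $\mathcal{S}'$ as $\mathcal{S}_1$, $\mathcal{S}_2$, and the two-terminal expanded structure, then invoke Fact~\ref{expansion 1}); your version simply makes the switching correspondence, the graph-surgery step, and the iteration over the $L_j$ explicit.
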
 

\begin{proof}
For any $\mathbb{C}$-axiom link $L_j$ (say, $L_1$), $\mathcal{C}_1$-node is connected to the connected and acyclic subgraph $\mathcal{S}_1$.  Similarly, $\mathcal{C}^{\ast}_1$-node is connected to the correct subgraph $\mathcal{S}_2$. $\mathcal{S}'$ consists of $\mathcal{S}_1$, $\mathcal{S}_2$ and the proof-structure containing two terminal links $\mathcal{C}_1$ and $\mathcal{C}_2$. By Fact \ref{expansion 1}, $\mathcal{S}'$ is correct.

\end{proof}

 Their switching condition guarantees the expansion property telling that the proof-structure of any excluded middle formula satisfies their switching condition (Fact \ref{expansion 1}). The expansion property implies that any non-atomic axiom link can be expanded to the proof structure from atomic axiom links satisfying the switching condition (Proposition \ref{expansion DR}). This provides a certain connection of the dual connective $\mathcal{C}^{\ast}$ and logical negation through the atomic negations.

\bigskip

We now introduce the notion of $n$-ary $\pa$-switching ($n$ is an arbitrarily fixed non-zero natural number).

\begin{defn}
Let $\mathcal{S}$ be an arbitrary proof-structure containing  $\pa_j,  j\in\{1,\dots, i\}$ ($i$ is a non-zero natural number). For an arbitrary $j\in\{1,\dots, n\}$, we write the arity of $\pa_j$ as $a(j)$. We denote the set of all n-ary $\pa$-links contained in a proof-structure $\mathcal{S}$ by $\pa(\mathcal{S})$. A $n$-ary $\pa$-switching $I$ of a proof-structure $\mathcal{S}$ is a function $f: \pa_j\in\pa(\mathcal{S})\to\{1,\dots,a(j)\}$. The graph $\mathcal{S}_I$ is obtained by deleting the edges of each n-ary par link $\pa_j$ except the number of $I$.

\end{defn}

\begin{fact1}\label{par^n coincidence}
Let  $\mathcal{C}$ be a generalized connective such that $\mathcal{C}=\pa^n$ holds. Then, the partition switching and the $\pa^n$-switching  coincide.
\end{fact1}

This $n$-ary  $\pa$-switching is, of course, decomposable and essentially  reduced to the binary connective $\pa$.

\medskip

We summarize the trade-off relationship between the sequentialization theorem and the expansion property;  a proof-net in the sense of Danos-Regnier switching is not necessarily sequentializable, as we explained after Definition 16, while any proof-net in the sense of our partition switching is sequentializable. On the other hand, the expansion property does not hold for our partition switching, while it holds for Danos-Regnier's one.  Note that these trade-off relationship occur only when a proof-structure contains non-atomic axiom links. We formulate the trade-off relationship more precisely. The next proposition says that the expansion property and the sequentialization theorem are not compatible in general. 

\begin{prop}\label{correct-seq}
 A switching $f:p\to X$ satisfies both the expansion property and the sequentialization theorem if and only if $f$ is the n-ary $\pa$-switching.
\end{prop}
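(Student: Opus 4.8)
The plan is to prove both directions of the biconditional, treating the ``$n$-ary $\pa$-switching'' as the known reference point (Fact~\ref{par^n coincidence} already tells us it coincides with the partition switching on purely par-based connectives).

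First I would prove the easy direction. Suppose $f$ is the $n$-ary $\pa$-switching, i.e. every $\mathbb{C}$-link in play is a $\pa^k$. Then sequentialization holds by Theorem~\ref{gen. seq} together with Fact~\ref{par^n coincidence} (the partition switching specializes to exactly this switching on $\pa^k$-links, and Theorem~\ref{gen. seq} applies to any proof-net in the sense of the partition switching). For the expansion property, one must check that a proof-structure with the two terminal nodes $\pa^k(A_1,\dots,A_k)$ and $(\pa^k)^{\ast}(\oldneg A_1,\dots,\oldneg A_k) = \tensor^k(\oldneg A_1,\dots,\oldneg A_k)$ built from atomic links satisfies the $\pa^n$-switching criterion; this is immediate since $\tensor^k$ contributes no switching and each choice of one premise of the $\pa^k$-link yields a tree on $2k$ atoms. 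Equivalently, one invokes Fact~\ref{axiom deducibility}: the excluded-middle sequent $\vdash \pa^k(A_1,\dots,A_k), \tensor^k(\oldneg A_1,\dots,\oldneg A_k)$ is provable, so its proof sequentializes (via Proposition~\ref{gen. seq2}, or directly) to a correct net.

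The harder direction is the contrapositive: if $f$ is a switching that is \emph{not} the $n$-ary $\pa$-switching, then $f$ fails either the expansion property or the sequentialization theorem. The key observation is a dichotomy forced by the binary-restriction remarks: any ``reasonable'' switching for a generalized connective $\mathcal{C}$ must, on each partition $p \in P_{\mathcal{C}}$, either select one class (the Danos--Regnier style, which we have shown gives expansion but not sequentialization whenever $\mathcal{C} \neq \tensor^k, \pa^k$, by Facts~\ref{expansion 1} and \ref{axiom deducibility} and the discussion after Definition~16) or select (at least/exactly) one element from each class (the partition style, which by Theorem~\ref{gen. seq} gives sequentialization but, as pointed out for non-decomposable $\mathcal{C}$, fails expansion since the excluded-middle net is no longer correct). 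So the core of the argument is: if $\mathcal{C}$ is not $\pa^k$ (equivalently, there is some link on which $f$ does genuine switching but which is not a $\pa^k$-link), pick a non-decomposable dual pair $(\mathcal{C},\mathcal{C}^{\ast})$ — or more simply a $\tensor^k$-link, $k \geq 1$ — and run the two cases: if $f$ behaves ``class-selecting'' on it, then $f$-correctness of the excluded-middle structure holds but sequentialization fails by Fact~\ref{axiom deducibility}; if $f$ behaves ``element-from-each-class'' on it, then sequentialization holds on that fragment but the excluded-middle structure $\mathcal{C}(A_1,\dots,A_n),\mathcal{C}^{\ast}(\oldneg A_1,\dots,\oldneg A_n)$ is $f$-incorrect (a bad switching disconnects the meeting graph once we are allowed to pick incompatible representatives), so the expansion property fails.

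The main obstacle I anticipate is making precise the informal phrase ``a switching $f: p \to X$'' so that this dichotomy is genuinely exhaustive rather than merely covering the two definitions the paper has discussed. The cleanest way to handle this is to restrict attention, exactly as the binary-case remarks (Remark after Theorem~\ref{gen. seq} and the Danos--Regnier Remark) do, to switchings that on the binary fragment reduce to the usual $\pa$-switching — i.e. $\tensor$ has no switching and $\pa$ does — and that act locally on each link via its associated partition set; then ``select one class'' versus ``select one representative per class'' are the only two monotone options for how a single partition can be cut down, and the argument above closes. I would state this restriction explicitly at the start of the proof (it is the implicit standing assumption throughout the section) and then the dichotomy, hence the biconditional, follows by combining Theorem~\ref{gen. seq}, Fact~\ref{expansion 1}, Fact~\ref{axiom deducibility}, and Fact~\ref{par^n coincidence}.
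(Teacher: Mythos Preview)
Your if-part matches the paper's. The only-if part, however, takes an unnecessary detour and carries exactly the gap you yourself flag.

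You try to classify $f$ by its local behaviour (class-selecting versus one-representative-per-class) and then argue that each case kills one of the two properties. This forces you to posit an exhaustive dichotomy that the paper never states and that you cannot justify without adding an ad~hoc standing assumption. The paper's proof sidesteps all of this: it treats $f$ as a pure black box and simply \emph{composes} the two hypotheses. Assume $f$ enjoys both expansion and sequentialization. Pick the concrete ternary connective $\mathcal{C}(A,B,C)=(A\pa B)\tensor C$, so $\mathcal{C}^{\ast}(\oldneg A,\oldneg B,\oldneg C)=(\oldneg A\tensor\oldneg B)\pa\oldneg C$. By the expansion property the excluded-middle proof-structure with conclusions $\mathcal{C}(A,B,C)$ and $\mathcal{C}^{\ast}(\oldneg A,\oldneg B,\oldneg C)$ is $f$-correct; by sequentialization the sequent $\vdash\mathcal{C}(A,B,C),\mathcal{C}^{\ast}(\oldneg A,\oldneg B,\oldneg C)$ is therefore provable. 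But Fact~\ref{axiom deducibility} says this sequent is provable only when $\mathcal{C}$ is $\pa^k$ or $\tensor^k$, which it is not. Contradiction.

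The point you missed is that the two properties, taken together, already manufacture a provable excluded-middle sequent for \emph{every} generalized connective in the language; nothing about the internal mechanism of $f$ is needed, and no dichotomy on switching styles ever enters. Your case analysis is not wrong in spirit, but it is both harder and weaker: it requires an extra structural hypothesis on $f$, whereas the paper's direct composition argument works for arbitrary $f$.
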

\begin{proof}
 (only-if part)  Let $f$ be an arbitrary switching for generalized connectives that satisfies both the excluded middle property and the sequentialization theorem. We assume that $f$ is not $n$-ary $\pa$-switching and we show a contradiction. By the expansion property, the proof-structure $\mathcal{S}$ containing two terminal nodes $\mathcal{C}(A,B,C)=(A\pa B)\tensor C$, $\mathcal{C}^{\ast}(\oldneg A,\oldneg B,\oldneg C)=(\oldneg A\tensor\oldneg B)\pa\oldneg C$ (where $A, B$ and $C$ are atoms) is correct. By the sequentialization theorem, we obtain a proof $\pi$ of the sequent $\vdash \mathcal{C}(A,B,C), \mathcal{C}^{\ast}(\oldneg A,\oldneg B,\oldneg C)$ from $\mathcal{S}$.
This contradicts Fact \ref{axiom deducibility}.

(if-part) Let $I$ be the $n$-ary par switching. The proof-structure $\mathcal{S}$ containing exactly two terminal nodes $\pa^n$ and $\tensor^{n}$ is correct. Hence, the expansion property holds. By Fact \ref{par^n coincidence} and Theorem \ref{gen. seq}, Sequentialization Theorem follows.

\end{proof}

\section{Conclusions}

Danos and Regnier's switching condition implies the expansion property but does not imply the sequentialization property.  We have introduced a new switching condition which implies the sequentialization property but  does not imply the expansion property. Thus, except in the purely tensor-based or purely par-based cases, sequentialization and expansion cannot be achieved by a single  $n$-ary switching notion.

\section*{Acknowledgement}
We are grateful to an anonymous reviewer for helpful comments on the earlier version of this paper. This work was supported by MEXT-Grant-in-Aid for Scientific Research (B) (JP17H02265), JSPS- AYAME Program and Promotion Program for Next Generation Research Projects (2013-2017) of Keio University. The first author was also supported by Keio University doctorate student Grant-in-Aid Program.




\begin{thebibliography}{99}
\bibitem{DR}Danos, V., Regnier, L.: \textit{The structure of multiplicatives}, Archives for Mathematical Logic, vol. 28, Issue 3, pp. 181-203, (1989).
\bibitem{GF}Gamberardino P.D., Faggian, C.: \textit{Jump from parallel to sequential proofs: Multiplicatives}, Computer Science Logic, vol. 4207, the series Lecture Notes in Computer Science, Springer, pp. 319-333, (2006).
\bibitem{Gir}Girard, J.Y.: \textit{Linear logic}, Theoretical Computer Science, vol. 50,  pp. 1-102, (1987).
\bibitem{Gir proofnet}Girard, J.Y.: \textit{Proof-nets: the parallel syntax for proof-theory}, in Ursini and Agliano (eds.), Logic and Algebra, CRC press,  pp.97-124, (1996).
\bibitem{Maieli1}Maieli, R., Puite, Q.: \textit{Modularity of proof-nets generating the type of a module}, Archive for Mathematical Logic, vol. 44, Issue 2, pp.167-193 (2005).
\bibitem{Maieli2}Maieli, R.:  \textit{Non decomposable connectives of linear logic}, Annals of Pure and Applied Logic, vol. 170, Issue 11, (2019). 

\end{thebibliography}



\section{Appendix}

Multiplicative Linear Logic $\mathsf{MLL}$ \cite{Gir} is defined as follows.

\begin{defn}
The formulas of $\mathsf{MLL}$  are defined by

$$A::= P | \oldneg P | A\tensor A | A\pa A$$
where $P$ ranges over a denumerable set of propositional variables.
\medskip

Negation is defined as the abbreviation in the following way. Here, we sometimes call $\oldneg A$ the dual formula of $A$:
\medskip

$\oldneg\oldneg P:=P, \oldneg(A\tensor B):=\oldneg A\pa\oldneg B, \oldneg(A\pa B):=\oldneg A\tensor\oldneg B$
, where $P$ is an atomic formula.
\end{defn}

The inference rules of the sequent calculus $\mathsf{MLL}$ are given in Fig.~\ref{MLL rules}. We regard the finite sequence of formulas in a sequent as a multiset rather than an ordered sequence; therefore, the exchange rule is not needed.

  \begin{figure}[h]
\begin{framed}

\begin{center}
\AxiomC{}
\RightLabel{(Ax)}
\UnaryInfC{$\ \vdash\  P, \oldneg P$}
\DisplayProof
\medskip
\def\fCenter{\ \vdash\ }
\Axiom$\fCenter \Gamma,  A$
\Axiom$\fCenter\Delta, \oldneg A$
\RightLabel{(Cut)}
\BinaryInf$\fCenter\Gamma, \Delta$
\DisplayProof
\end{center}

\begin{center}
\def\fCenter{\ \vdash\ }
\Axiom$\fCenter \Gamma,  A$
\Axiom$\fCenter\Delta, B$
\RightLabel{(\tensor)}
\BinaryInf$\fCenter\Gamma, \Delta, A\tensor B$
\DisplayProof
\medskip
\Axiom$\fCenter \Gamma,  A, B$
\RightLabel{($\pa$)}
\UnaryInf$\fCenter\Gamma, A\pa B$
\DisplayProof
\end{center}
\begin{center}
 where $P$ is an atomic formula
\end{center}
\caption{ Inference rules of $\mathsf{MLL}$}\label{MLL rules}
\end{framed}
\end{figure}

\medskip

Now we introduce the basic notions of graphical representations of $\mathsf{MLL}$-proofs.

\medskip

\begin{defn}\label{links}
$\mathsf{MLL}$-links are the following four kinds of graphs (Fig. \ref{MLL-links}).

\begin{figure}[h]
\begin{framed}
\begin{center}
\begin{tikzpicture}
\node at (0,0){$P$};
\node at (2,0){$\oldneg P$}    ;
\draw[thick, rounded corners=8pt](0,0.2)--(0,0.5)--(2,0.5)--(2,0.2); 
\node at (3.3,0){(Axiom)};
\end{tikzpicture}
\bigskip
\begin{tikzpicture}
\node at (0,0){$A$};
\node at (2,0){$\oldneg A$}     ;
\node at (3,0){(Cut)};
\draw[thick, rounded corners=8pt](0,-0.2)--(0,-0.5)--(2,-0.5)--(2,-0.2);
\end{tikzpicture}
\end{center}

\begin{center}
\begin{tikzpicture}
\node at (0,0){$A$};
\node at (2,0) {$B$};
\node at (1,-1){$A\tensor B$};
\draw [thick](0.15,-0.1)--(0.9,-0.8);
\draw[thick](1.8,-0.1)--(1.1,-0.8);
\node at (3,-0.5){(Tensor)};
\end{tikzpicture}
\bigskip
\begin{tikzpicture}
\node at (0,0){$A$};
\node at (2,0) {$B$}     ;
\node at (1,-1){$A\pa B$};
\draw [thick](0.15,-0.1)--(0.9,-0.8);
\draw[thick](1.8,-0.1)--(1.1,-0.8);
\node at (3,-0.5){(Par)};
\end{tikzpicture}
\end{center}

 where $P$ is an atomic formula.

\caption{$\mathsf{MLL}$-links}\label{MLL-links}
\end{framed}
\end{figure}

\begin{itemize}
\item The node of each label $\tensor$ (resp. $\pa$) has two ordered premises and one conclusion. If $A$ is the label of the first premise and $B$ that of the second, then the conclusion is labelled $A\tensor B$ (resp. $A\pa B$);

\item The node of label ax has no premise and two ordered conclusions. If the label of the first conclusion is $P$, the label of the second conclusion is $\oldneg P$, where $P$ is an atomic formula;

\item The node of labelled cut has two ordered premises and no conclusion. If the label of the first conclusion is $A$, the label of the second conclusion is $\oldneg A$.

\end{itemize}
\end{defn}

We define the notion of proof-structure and of correct proof-structure or proof-net, as follows.
%
\begin{defn}
A proof-structure $\mathcal{S}$ is a non-empty finite undirected graph whose nodes are labelled by  $\mathsf{MLL}$-formulas and whose edges are labeled by the $\mathsf{MLL}$-links satisfying the following condition; each formula occurrence is the premise of at most one link and the conclusion of at most one link; terminal formulas are those that are conclusions of no link.

\end{defn}
  An instance of a formula in a proof-structure that is not the premise of any link is called terminal.


\begin{defn}
We denote the set of all par links contained in a proof-structure $\mathcal{S}$ by $\pa(\mathcal{S})$. A $\pa$-switching $I$ of a proof-structure $\mathcal{S}$ is a function $\pa(\mathcal{S})\to\{$left, right$\}$.
\end{defn}



\begin{defn}(Danos and Regnier\cite{DR})
A proof-structure $\mathcal{S}$ is correct if and only if for any switching $I$, the induced graph $\mathcal{S}_I$ is connected and acyclic. A proof-net $\mathcal{S}$ is a correct proof-structure. \end{defn}

The above condition is often called the switching condition. Then, one can say that a  proof-structure is a proof-net when it satisfies the switching condition.

\medskip

\begin{defn}

Let $\pi$ be a proof of $\mathsf{MLL}$ and $\mathcal{S(\pi)}$ be the proof-structure that is obtained from $\pi$ by applying the  translation $\mathcal{S(-)}$ written in Fig. \ref{translation MLL}  recursively.  

\begin{figure}[H]
\begin{framed}
\begin{center}
\scalebox{0.8}{
\AxiomC{}
\RightLabel{(Ax)}
\UnaryInfC{$\ \vdash\  P, \oldneg P$}
\DisplayProof
\medskip
$\leadsto$
\begin{tikzpicture}
\node at (0,0){$P$};
\node at (2,0){$\oldneg P$}     ;
\draw[thick, rounded corners=8pt](0,0.2)--(0,0.5)--(2,0.5)--(2,0.2);
\end{tikzpicture}}
\end{center}

\begin{center}
\scalebox{0.8}{
\def\fCenter{\ \vdash\ }
\Axiom$\fCenter \Gamma,  A$
\Axiom$\fCenter\Delta, \oldneg A$
\RightLabel{(Cut)}
\BinaryInf$\fCenter\Gamma, \Delta$
\DisplayProof
\medskip
$\leadsto$
\begin{tikzpicture}
\node at (0,0){$A$};
\node at (2,0){$\oldneg A$}     ;
\draw[thick, rounded corners=8pt](0,-0.2)--(0,-0.5)--(2,-0.5)--(2,-0.2);
\end{tikzpicture}}
\end{center}
\begin{center}
\scalebox{0.8}{
\def\fCenter{\ \vdash\ }
\Axiom$\fCenter \Gamma,  A$
\Axiom$\fCenter\Delta, B$
\RightLabel{(\tensor)}
\BinaryInf$\fCenter\Gamma, \Delta, A\tensor B$
\DisplayProof
\medskip
$\leadsto$
\begin{tikzpicture}
\node at (0,0){$A$};
\node at (2,0) {$B$}     ;
\node at (1,-1){$A\tensor B$};
\draw [thick](0.15,-0.1)--(0.9,-0.8);
\draw[thick](1.8,-0.1)--(1.1,-0.8);
\end{tikzpicture}

\def\fCenter{\ \vdash\ }
\Axiom$\fCenter \Gamma,  A, B$
\RightLabel{($\pa$)}
\UnaryInf$\fCenter\Gamma, A\pa B$
\DisplayProof
\medskip
$\leadsto$
\begin{tikzpicture}
\node at (0,0){$A$};
\node at (2,0) {$B$}     ;
\node at (1,-1){$A\pa B$};
\draw [thick](0.15,-0.1)--(0.9,-0.8);
\draw[thick](1.8,-0.1)--(1.1,-0.8);
\end{tikzpicture}}
\end{center}

where $P$ is an atomic formula.
\caption{Translation of  $\mathsf{MLL}$-proof}\label{translation MLL}
\end{framed}
\end{figure}

 A proof-structure $\mathcal{T}$  is said to be sequentializable if there exists a $\mathsf{MLL}$-proof $\pi$ such that $\mathcal{S(\pi)}=\mathcal{T}$ holds.

\end{defn}


\begin{prop}(Sequentialization of $\mathsf{MLL}$, Girard, Danos and Regnier) \cite{Gir, DR}

A proof-structure $\mathcal{S}$ is sequentializable if and only if $\mathcal{S}$ is a proof-net.
 
\end{prop}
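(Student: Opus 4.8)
The plan is to prove the two implications separately, each by an induction. For the direction from sequentializability to correctness I would induct on the number of inference rules in a proof $\pi$ with $\mathcal{S}(\pi)=\mathcal{S}$. In the base case $\pi$ is a single axiom $\vdash P,\oldneg P$, whose only switching graph is the single edge joining $P$ and $\oldneg P$, trivially connected and acyclic. For the inductive step I would case on the last rule of $\pi$. If it is a $\pa$-rule applied to a proof of $\vdash\Gamma,A,B$, then by the induction hypothesis the sub-proof-structure is a proof-net; every switching of $\mathcal{S}$ restricts to a switching of the sub-structure and differs from it only by the addition of the new $\pa$-node together with exactly one selected premise-edge, so connectedness and acyclicity are preserved (a degree-one vertex is adjoined). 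If it is a $\tensor$-rule, or a cut, combining proofs of $\vdash\Gamma,A$ and $\vdash\Delta,B$, then by the induction hypothesis each of the two sub-proof-structures is a proof-net, and any switching graph of $\mathcal{S}$ is obtained by gluing the two disjoint connected acyclic switching graphs through the single new node (two edges, one into each component), which again yields a connected acyclic graph.

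For the converse direction, from correctness to sequentializability, I would induct on the number of links in $\mathcal{S}$. If $\mathcal{S}$ contains a terminal $\pa$-link with conclusion $A\pa B$, I would remove it; the resulting structure $\mathcal{S}'$ remains a proof-net, since every switching graph of $\mathcal{S}'$ is a subgraph of a switching graph of $\mathcal{S}$ obtained by dropping the chosen $\pa$-edge, so by the induction hypothesis there is a proof $\pi'$ of $\vdash\Gamma,A,B$, and appending the $\pa$-rule gives the required proof. If $\mathcal{S}$ contains no terminal $\pa$-link, I would invoke the binary case of the Splitting Lemma (Lemma \ref{Splitting}): some terminal $\tensor$-link $A\tensor B$ is splitting, so its removal disconnects $\mathcal{S}$ into two sub-proof-nets $\mathcal{S}_1$ and $\mathcal{S}_2$ with conclusions $\vdash\Gamma,A$ and $\vdash\Delta,B$. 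Each $\mathcal{S}_i$ is again correct, so by the induction hypothesis is sequentialized by some $\pi_i$, and applying the $\tensor$-rule to $\pi_1$ and $\pi_2$ yields a proof $\pi$ with $\mathcal{S}(\pi)=\mathcal{S}$; terminal cut-links are treated exactly as terminal $\tensor$-links. The base case is the single axiom link, sequentialized by the axiom rule.

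I expect the genuine content to lie entirely in the converse direction, and the main obstacle to be the Splitting Lemma, since that is the only place where the full strength of the switching condition is used: one must show that the absence of terminal $\pa$-links forces a terminal $\tensor$-link whose removal preserves connectedness, equivalently that not every terminal $\tensor$ can be non-splitting. This is precisely the binary instance of the argument already carried out in Sublemma \ref{seq thm sublemma} and Lemma \ref{Splitting}: if every terminal $\tensor$ were non-splitting, one would trace upward paths from each such link that must re-meet at non-terminal nodes, and chaining these with downward paths produces a cycle in some switching graph, contradicting correctness. Consequently the present proposition is the special case $m=2$ of the binary connectives, namely Theorem \ref{gen. seq} together with Proposition \ref{gen. seq2}, specialized to ordinary $\pa$ and $\tensor$, and no argument beyond this specialization is needed.
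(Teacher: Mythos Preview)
Your proposal is correct and matches the paper's approach. The paper does not give a separate proof of this proposition in the Appendix---it is stated there as a known background result---but it explicitly remarks (just after Theorem~\ref{gen. seq}) that the binary sequentialization theorem is the $n=2$ special case of Theorem~\ref{gen. seq} and Proposition~\ref{gen. seq2}, whose proofs proceed exactly as you outline: induction on proof length for the easy direction, and induction on the number of links via removal of terminal $\pa$-links followed by the Splitting Lemma for the converse.
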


Girard proved the sequentialization theorem using his long-trip condition \cite{Gir}. After that, Danos and Regnier gave the switching condition and stated above form of the theorem \cite{DR}.

\end{document}